\theoremstyle{definition}
\newcommand{\rbb}{\mathbb{R}}
\newcommand{\cbb}{\mathbb{C}}
\newcommand{\CM}{\mathcal{CM}}
\newcommand{\Sc}{\mathcal{S}}
\newcommand{\F}[1]{\mathcal{F}\left[#1\right]}
\newcommand{\Kcos}{\mathcal{K}_{\cos}}
\newcommand{\Ksin}{\mathcal{K}_{\sin}}
\newcommand{\la}{\langle}
\newcommand{\ra}{\rangle}
\newcommand{\dom}{\text{Dom}}
\newcommand{\xbf}{\mathbf{x}}
\newcommand{\ybf}{\mathbf{y}}
\newcommand{\Fbf}{\mathbf{F}}
\newcommand{\CMb}{\mathcal{CM}_b}
\renewcommand{\Im}{\text{Im}}
\renewcommand{\Re}{\text{Re}}
\renewcommand{\d}{\mathrm{d}}
\newcommand{\domain}{\mathcal{O}}
\newcommand{\f}{\varphi}
\newcommand{\ep}{\epsilon}
\newcommand{\Enone}[1]{\mathbb{E}#1}
\newcommand{\E}[1]{\mathbb{E}#1}
\newcommand{\close}{\!\!\!}
\newcommand{\sm}{\setminus\{0\}}
\numberwithin{equation}{section}
\theoremstyle{definition}
\theoremstyle{plain}
\newtheorem{theorem}{Theorem}[section]
\newtheorem{corollary}[theorem]{Corollary}
\newtheorem{lemma}[theorem]{Lemma}
\newtheorem{assumption}[theorem]{Assumption}
\newtheorem{proposition}[theorem]{Proposition}
\newtheorem{definition}[theorem]{Definition}
\newtheorem{remark}[theorem]{Remark}
\numberwithin{equation}{section}
\title[Regularity of a linear stochastic equation with memory]{On the H\"older regularity of a linear stochastic partial-integro-differential equation with memory}
\author[S.A.~McKinley]{Scott A.~McKinley$^1$}
\author[H.D.~Nguyen]{Hung D.~Nguyen$^2$}
\thanks{\noindent \hspace{-0.52cm} $^1$ Department of Mathematics, Tulane University, New Orleans, Louisiana, USA, 70118.\\
$^2$ Department of Mathematics, University of California, Los Angeles, California, USA, 90095. }
\begin{document}

\maketitle

\begin{abstract}
In light of recent work on particles fluctuating in linear viscoelastic fluids, we study a linear stochastic partial--integro--differential equation with memory that is driven by a stationary noise on a bounded, smooth domain. Using the framework of generalized stationary solutions introduced in~\cite{mckinley2018anomalous}, we provide conditions on the differential operator and the noise to obtain the existence as well as H\"older regularity of the stationary solutions for the concerned equation. As an application of the regularity results, we compare to analogous classical results for the stochastic heat equation. When the 1d stochastic heat equation is driven by white noise, solutions are continuous with space and time regularity that is H\"older $(1/2-\ep)$ and $(1/4-\ep)$ respectively. When driven by colored--in--space noise, solutions can have a range of regularity properties depending on the structure of the noise. Here, we show that the particular form of colored--in--time memory that arises in viscoelastic diffusion applications, satisfying what is called the Fluctuation--Dissipation relationship, yields sample paths that are H\"older $(1/2-\ep)$ and $(1/2-\ep)$ in space and time.
\end{abstract}

\noindent{\it Keywords\/}: H\"older regularity, stationary processes, completely monotone functions

\section{Introduction}

 Let $\domain$ be a bounded domain in $\rbb^d$, $d\geq 1$ and denote by $H=L^2(\domain)$, the Hilbert space of square--integrable functions on $\domain$. Given a self--adjoint negative operator $A:D(A)\subset H\to H$ and a memory function $K:\rbb\to[0,\infty)$, we are interested in the following equation for $u(t,\xbf):\rbb\times\domain\to\rbb^d$
\begin{equation} \label{eq:spatial-gle}
\dot{u}(t,\xbf) = \int_{-\infty}^t\!\!\!K(t-s)Au(s,\xbf)\d s+\Fbf(t,\xbf),
\end{equation}
where $\Fbf(t,\xbf)$ has the representation
\begin{align}\label{form:F}
\Fbf(t,\xbf)=\sum_{k\geq 1} \lambda_k F_k(t)e_k(\xbf).
\end{align}
Here, $\{\lambda_k\}_{k \ge 1}$ is a sequence of positive constants, $\{e_k\}_{k\ge 1}$ is an orthonomal basis for $H$, and $\{F_k(t)\}_{k\geq 1}$ is a sequence of i.i.d.~stationary Gaussian processes with autocovariance 
\begin{equation} \label{eq:memory}
\E[F_k(t)F_k(s)]=K(|t-s|).
\end{equation}
The goal of this note is to give an analysis on the well--posedness and regularity of~\eqref{eq:spatial-gle} under appropriate assumptions on $A$ and $K$. Our motivation for ~\eqref{eq:spatial-gle}--\eqref{form:F}--\eqref{eq:memory} is as follows. Suppose that the orthonormal basis $\{e_k\}$, as in~\eqref{form:F}, diagonalizes $A$ in the sense that for each $k \in \mathbb{N}$, there exists an $\alpha_k>0$ such that $Ae_k=-\alpha_ke_k$. By writing $u(t,\xbf)=\sum_{k\geq 1}u_k(t)e_k(\xbf)$, formally, $u_k(t)$ satisfies the following equation
\begin{equation}\label{eq:spatial-gle:mode}
\dot{u}_k(t) = -\alpha_k\int_{-\infty}^t \!\!\! K(t-s)u_k(s)\d s+\lambda_k F_k(t).
\end{equation} 
Equation~\eqref{eq:spatial-gle:mode} is a simpler version of the so--called \emph{generalized Langevin} equation (GLE) that is used to model single--particle movements in viscoelastic fluids. The full 1d GLE has the following form \cite{hohenegger2017equipartition, hohenegger2017fluid, mckinley2018anomalous}
\begin{equation} \label{eq:gle:full}
\begin{cases}
m\dot{v}(t) = -\gamma v(t)-\beta\int_{-\infty}^t K(t-s)v(s)\d s + \lambda F(t)+ \sqrt{2\gamma}\dot{B}(t),\\
\Enone(F(t)F(s))=K(t-s),
\end{cases}
\end{equation}
where $m$ is the mass, $\gamma$ is the drag due to viscosity, $F(t)$ is a stationary Gaussian process and $B(t)$ is a standard Brownian motion. Here the covariance of the stationary force $F$ is the same as the memory kernel $K$, which is a force--balance condition resulting from the \emph{Fluctuation-Dissipation relationship} \cite{mori1965continued}. Historically, the GLE was first proposed and studied in the work of~\cite{kubo1966fluctuation,mori1965continued} and later popularized in \cite{mason1995optical}. The GLE has been given renewed attention in the last two decades due to its ability to produce what is known as \emph{anomalous diffusion} \cite{morgado2002relation}. Namely, let $x(t):=\int_0^t v (s)\d s$ where $v(t)$ satisfies \eqref{eq:gle:full}. If the memory kernel $K$ is integrable, it has been shown that the Mean-Squared Displacement $\E[x(t)^2]$ grows linearly in time. On the other hand, if there exists an $\alpha\in(0,1]$ such that $K(t)\sim t^{-\alpha}$ as $t\to\infty$, then for $\alpha\in(0,1)$, $\E[x(t)^2]\sim t^\alpha$ \cite{kou2008stochastic, mckinley2018anomalous} and for $\alpha=1$, $\E[x(t)^2]\sim t/\log(t)$ as $t\to\infty$ \cite{didier2020asymptotic}. Here $f(t)\sim g(t),\ t\to\infty$ means $\lim_{t\to\infty}f(t)/g(t)=c\in(0,\infty)$. While the GLE is useful when modeling single--particle movements, it fails to capture multi--particle interactions through fluctuating hydrodynamics. Recently, a first step in this direction was a model proposed and studied numerically in \cite{hohenegger2017fluid} that generalized the fluctuating Landau-Lifschitz Navier-Stokes equations from viscous to viscoelastic fluids. The model that we consider \eqref{eq:spatial-gle} is the linearized version of the system, which appears in \cite{hohenegger2017fluid}, without fluid specific terms like the Navier--Stokes non-linear term or time-dependent pressure. Inspired by~\cite{didier2020asymptotic,hohenegger2017fluid,mckinley2018anomalous}, we would like to investigate the regularity of the \emph{stationary} solutions for~\eqref{eq:spatial-gle} whenever they make sense.

Stochastic partial-integro-differential equations with infinite delay have been studied in literature \cite{bonaccorsi2012asymptotic,
clement1988existence,clement1998white,
weinan2002gibbsian,weinan2001gibbsian,ito1964stationary,
 yamazaki2019gibbsianB,yamazaki2019gibbsianA}. Recently, in \cite{bonaccorsi2012asymptotic}, when $A$ is the usual Laplace operator, the author considered a stochastic heat equation with memory that has the form
\begin{align} \label{eqn:stochastic-da-prato}
\dot{u}(t,\xbf)  = k_0A\,u(t,\xbf)-\int_{-\infty}^t \close K(t-s)A\, u(s,\xbf)\d s+\dot{W}(t,\xbf),\quad (t,\xbf)\in\rbb\times\domain,
\end{align}
where $k_0>0$ is a constant satisfying $k_0>\int_0^\infty \! K(s)ds$ and $W$ is a Wiener process. Equation~\eqref{eqn:stochastic-da-prato} arises from theory of thermal viscoelasticity, in which $k_0$ and $K$ respectively represent the instantaneous
conductivity and the heat flux memory kernel. See also~\cite{bonaccorsi2004large,bonaccorsi2006infinite,clement1996some,clement1997white,
clement1998white} for related work on stochastic systems similar to~\eqref{eqn:stochastic-da-prato}. It is interesting to study how and whether the H\"older regularity of solutions to \eqref{eqn:stochastic-da-prato} differ from the analogous stochastic heat equation that does not feature memory: 
\begin{align}  \label{eqn:stochastic-heat}
\dot{u}(t,\xbf)  = A\,u(t,\xbf)+\dot{W}(t,\xbf),\qquad (t,\xbf)\in\rbb\times\domain.
\end{align}
In fact, under suitable assumptions on $W$ and $A$, H\"older regularity of solutions~\eqref{eqn:stochastic-da-prato} does not differ from that of~\eqref{eqn:stochastic-heat} \cite{da2014stochastic, hairer2009introduction}. The proofs of these arguments rely on Kolmogorov's criterion and estimates on the differences in space and time within a solution. Motivated by these results, we would like to investigate how the memory structure intrinsic in ~\eqref{eq:spatial-gle} affects regularity of its solutions when compared to the analogous versions of \eqref{eqn:stochastic-da-prato} and \eqref{eqn:stochastic-heat}.

Our problem differs from those considered previously as follows: first, in preceding works, the random part is usually a Wiener process that is white-in-time and is defined on an auxiliary Hilbert space that does not depend on the memory. By contrast, our noise $\Fbf$ has the decomposition~\eqref{form:F} and each mode $F_k$ is related with the memory $K$ via relation~\eqref{eq:memory}, which follows from the Fluctuation--Dissipation relationship. Thus, one can regard $\Fbf$ in~\eqref{eq:spatial-gle} as being colored-in-time. In addition, the memory kernels in previous works \cite{bonaccorsi2012asymptotic,clement1988existence,clement1998white} are required to be integrable on the positive real line. This condition excludes those that have a slow power-law decay, namely $K(t)\sim t^{-\alpha}$ as $t\to\infty$ for $\alpha\in (0,1]$. As already mentioned, the Mean--Squared Displacement $\E[x(t)^2]$ associated with these kernels in~\eqref{eq:gle:full} obeys a sub-linear growth, i.e., $t^\alpha$ or $t/\log t$ as $t\to\infty$, respectively when $\alpha\in(0,1)$ \cite{mckinley2018anomalous} or $\alpha=1$ \cite{didier2020asymptotic}. In our work, the memory kernels need not be integrable and moreover, the conditions that we impose allow them to have a slow power--law decay, cf.~Assumption~\ref{cond:K}. To be more precise, throughout the rest of this work, we will restrict our memory kernel to the class of \emph{completely monotone} functions, which have been studied in great detail elsewhere \cite{
bonaccorsi2012asymptotic,clement1996some,clement1998white}. The important feature these functions possess is that their Fourier transforms admit clean expressions, which allows us to perform analysis on the regularity of~\eqref{eq:spatial-gle} in Section~\ref{sec:main-results}. As a result of these assumptions, the system~\eqref{eq:spatial-gle} admits better regularity compared to~\eqref{eqn:stochastic-da-prato}--\eqref{eqn:stochastic-heat}. In particular, the one-dimensional versions of~\eqref{eqn:stochastic-da-prato}--\eqref{eqn:stochastic-heat} with white noise are both $\gamma$-H\"older continuous in time for $\gamma\in (0,1/4)$~\cite{bonaccorsi2012asymptotic,da2014stochastic}. In contrast, as a corollary of Theorem~\ref{thm:spatial-gle:Holder} below, our 1D heat equation with colored noise $\Fbf$ is $\gamma-$H\"older continuous in time for $\gamma\in(0,1/2)$. 
 
Finally, we mention that on the deterministic side, there are many related works on partial differential equations with memeory kernels. For a few examples, we refer the reader to~\cite{barbu1975nonlinear,barbu1976nonlinear,
chekroun2012invariant,
conti2020nonclassical,
conti2006singular,gatti2005navier}.

The rest of the paper is organized as follows. In Section~\ref{sec:result}, we introduce our definition of a stationary solution for~\eqref{eq:spatial-gle}, state our assumptions, and summarize our main results, particularly on the H\"older regularity of ~\eqref{eq:spatial-gle}, cf.~Theorem~\ref{thm:spatial-gle:Holder}. In Section~\ref{sec:prelim}, we present the preliminaries needed for our analysis, e.g.,~weak formulations of~\eqref{eq:spatial-gle:mode} and a Fourier analysis on the class of completely monotone functions. We then detail the proofs of main results in Section~\ref{sec:main-results}. We finish with Section~\ref{sec:discuss} discussing related problems as well as applications to stochastic heat equations with memory.

\section{Assumptions and main results}  \label{sec:result}
 
Concerning the linear operator $A$, we assume the following standard condition (see also \cite[Section 5.5.1]{da2014stochastic}).
\begin{assumption} \label{cond:A} The orthonormal basis $\{e_k\}_{k\ge 1}$ in~\eqref{form:F} belongs to $D(A)$ and diagonalizes $A$, i.e., there exists an increasing sequence $0<\alpha_1\leq \alpha_2\leq \ldots$ such that $\{\alpha_k\}\uparrow\infty$ and that
\begin{displaymath}
Ae_k=-\alpha_k e_k,\quad k\geq 1.
\end{displaymath}
We further assume that
\begin{displaymath}
|e_k(\xi)|\leq M c_k,\quad |\nabla e_k(\xi)|\leq M\alpha_k^{1/2}c_k, \quad\forall\xi\in\domain,\quad k\geq 1,
\end{displaymath}
for some positive constant $M$ and a sequence of (increasing) positive numbers $\{c_k\}_{k\ge 1}$.
\end{assumption}

One can think of $A$ as the usual Laplacian operator, but it need not be. Up to this point, we have not defined what we mean by a stationary solution of~\eqref{eq:spatial-gle:mode}. We shall postpone introducing the formulation of these solutions, cf.~Definition~\ref{def:GLE:weak-solution}, as well as the required elements to construct them in Section~\ref{sec:GLE:weak-solution}, based on the work of~\cite{didier2020asymptotic,ito1954stationary,mckinley2018anomalous}. We now state the following important definition of a {\it stationary solution} of~\eqref{eq:spatial-gle}.
\begin{definition}\label{def:spatial-gle:solution} A random field $u(t,\xbf)=\sum_{k\geq 1}u_k(t)e_k(\xbf)$ is called a stationary solution of~\eqref{eq:spatial-gle} if 

\noindent (a) for all $t\in\rbb$, $u(t,\cdot)\in L^2(\Omega;H)$, i.e., $\Enone\|u(t,\cdot)\|^2_H<\infty$; and

\noindent (b) the process $u_k(t)$ satisfies $u_k(t)=\la V_k,\delta_{t}\ra $ where $\{V_k\}_{k\ge 1}$ are mutually independently weak stationary solutions of~\eqref{eq:spatial-gle:mode} in the sense of distribution given by Definition~\ref{def:GLE:weak-solution}, $\delta_t$ is the Dirac function centered at $t$, and $\la V_k,\delta_{t}\ra$ denotes the action of $V_k$ when applied to $\delta_t$.
\end{definition}

With regard to the memory kernel $K$, we first state the definition of the class of \emph{completely monotone} functions, of which we will assume that $K$ is a member.
\begin{definition}\label{def:complete-monotone}
A function $K:(0,\infty)\to[0,\infty)$ is completely monotone if $K$ is of class $C^\infty(0,\infty)$ and $(-1)^n K^{(n)}(t)\geq 0$ for all $n\geq 0$, $t>0$.
\end{definition}

Throughout this note, unless stated otherwise, we require that the memory kernel $K$ satisfy the following assumption.
\begin{assumption} \label{cond:K} We assume that $K:\rbb\to[0,\infty)$ is symmetric and eventually decreasing to zero as $t\to\infty$. Furthermore, $K(t)$ is completely monotone on $t\in (0,\infty)$.
\end{assumption}

The class of completely monotone functions includes a special case of memory kernels for which $K$ can be be expressed as a sum of exponentials~  \cite{goychuk2012viscoelastic,goychuk2014molecular,hohenegger2017fluid,morgado2002relation}. In addition, Assumption~\ref{cond:K} allows us to include power--law decay functions, e.g., $(1+|t|)^{-\alpha}$, $\alpha>0$ \cite{mckinley2018anomalous}. The advantage of completely monotone functions is that their Fourier transform can be explicitly computed (cf. Lemma~\ref{lem:complete-monotone:fourier}) and thus is helpful for our analysis. We will see later in Section~\ref{sec:main-results} that their Fourier structures play an important role when we investigate the well--posedness and regularity of~\eqref{eq:spatial-gle}.

With regards to the parameters $\lambda_k$ and $\alpha_k$, we first assume that they satisfy the following condition. See also \cite[Hypothesis 2.10]{bonaccorsi2012asymptotic} and \cite[Condition (5.40)]{da2014stochastic}.
\begin{assumption} \label{cond:alpha-k:wellposed}
Let $\{\alpha_k\}_{k\geq 1}$ be as in Assumption~\ref{cond:A} and $\{\lambda_k\}_{k\geq 1}$ be as in~\eqref{form:F}. We assume that
\begin{align*}
\sum_{k\geq 1}\frac{\lambda_k^2}{\alpha_k}<\infty.
\end{align*}
\end{assumption}

In Theorem~\ref{thm:spatial-gle:well-posed} below, we will see not only that Assumption~\ref{cond:alpha-k:wellposed} be necessary, but it is also sufficient to guarantee the existence of weak stationary solutions of~\eqref{eq:spatial-gle}. Moreover, we note that Assumption~\ref{cond:alpha-k:wellposed} is only about the pairs $\{(\alpha_k,\lambda_k)\}_{k \geq 1}$ and does not require information on $\{c_k\}_{k\ge 1}$ as in Assumption~\ref{cond:A}. Concerning H\"older continuity, we assume the following further condition on the triples $\{(c_k,\alpha_k, \lambda_k)\}_{k \geq 1}$.
\begin{assumption} \label{cond:alpha-k:Holder}
Let $\{\alpha_k\}_{k\geq 1}$, $\{c_k\}_{k\ge 1}$ be as in Assumption~\ref{cond:A} and $\{\lambda_k\}_{k\geq 1}$ be as in~\eqref{form:F}. There exists a constant $\eta\in(0,1)$ such that
\begin{align*}
\sum_{k\geq 1}\frac{\lambda_k^2c_k^2}{\alpha_k^\eta}<\infty.
\end{align*}
\end{assumption}

\begin{remark}
Since $c_k>0$ and $\alpha_k$ are non--decreasing by Assumption~\ref{cond:A}, it is clear that Assumption~\ref{cond:alpha-k:Holder} implies Assumption~\ref{cond:alpha-k:wellposed}. We note however that Assumption~\ref{cond:alpha-k:Holder} is also standard and can be found in the literature of linear SPDEs, see for instance~\cite[Lemma 3.27]{bonaccorsi2012asymptotic} and~\cite[Lemma 5.21]{da2014stochastic}.
\end{remark}

  We now state our first important result, giving the existence of stationary solutions for~\eqref{eq:spatial-gle}.
\begin{theorem}[Well--posedness]\label{thm:spatial-gle:well-posed} Suppose that Assumptions~\ref{cond:A} and \ref{cond:K} hold. Then, Equation~\eqref{eq:spatial-gle} admits stationary solutions $u(t,\xbf)$ in the sense of Definition~\ref{def:spatial-gle:solution} if and only if Assumption~\ref{cond:alpha-k:wellposed} is satisfied.
\end{theorem}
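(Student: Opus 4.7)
The plan is to exploit the spectral decomposition of $A$ to decouple~\eqref{eq:spatial-gle} into the scalar modes~\eqref{eq:spatial-gle:mode} and reduce both directions of the equivalence to a single mode-wise variance identity. By Parseval and the orthonormality of $\{e_k\}$, Definition~\ref{def:spatial-gle:solution}(a) is exactly
\[
\E\|u(t,\cdot)\|_H^2 = \sum_{k\geq 1}\E|u_k(t)|^2 < \infty,
\]
so the theorem will follow from the claim that every weak stationary solution $V_k$ of~\eqref{eq:spatial-gle:mode} supplied by Definition~\ref{def:GLE:weak-solution} satisfies
\[
\E|\langle V_k,\delta_t\rangle|^2 = \frac{\lambda_k^2}{\alpha_k},\qquad k\ge 1.
\]
For sufficiency, I would take mutually independent copies $\{V_k\}$ and assemble $u(t,\xbf) = \sum_k\langle V_k,\delta_t\rangle e_k(\xbf)$; the variance identity together with Assumption~\ref{cond:alpha-k:wellposed} places $u$ in $L^2(\Omega;H)$ and gives a stationary solution in the sense of Definition~\ref{def:spatial-gle:solution}. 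For necessity, the same variance identity applied to any given stationary solution, combined with the Parseval sum, forces $\sum_k \lambda_k^2/\alpha_k < \infty$.

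The variance identity is the technical crux. Writing $G(t):=K(t)\mathbf{1}_{t>0}$, the complete monotonicity of $K$ (Assumption~\ref{cond:K}) yields, via the associated Bernstein measure $\nu$, the Fourier representation $\hat G(\xi) = \int_0^\infty (x+i\xi)^{-1}\,\d\nu(x)$ together with $\hat K(\xi) = 2\Re\hat G(\xi)$; this is the content of the preliminary lemmas on completely monotone kernels in Section~\ref{sec:prelim}. I expect Definition~\ref{def:GLE:weak-solution} to characterize $V_k$ as the centered Gaussian stationary process with power spectral density
\[
S_k(\xi) = \frac{\lambda_k^2\,\hat K(\xi)}{|i\xi+\alpha_k\hat G(\xi)|^2},
\]
and $\langle V_k,\delta_t\rangle$ to be a well-defined $L^2$ random variable precisely when $S_k \in L^1(\rbb)$. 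Since $\Re(i\xi+\alpha_k\hat G(\xi)) = \alpha_k\Re\hat G(\xi)$ on the real line, a short manipulation rewrites
\[
\E|\langle V_k,\delta_t\rangle|^2 = \frac{1}{2\pi}\int_\rbb S_k(\xi)\,\d\xi = \frac{\lambda_k^2}{\pi\alpha_k}\int_\rbb \Re\!\left(\frac{1}{i\xi+\alpha_k\hat G(\xi)}\right)\d\xi,
\]
so the identity reduces to the universal constant $\int_\rbb \Re(1/(i\xi+\alpha_k\hat G(\xi)))\,\d\xi = \pi$.

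I would extract this constant by a contour argument in the lower half-plane $\{\Im\xi<0\}$, and this is where I expect the main obstacle to lie. The Bernstein representation forces $\Re\hat G(\xi)>0$ throughout the LHP, so $\Re(i\xi+\alpha_k\hat G(\xi))>0$ there and $\hat g_k(\xi):=1/(i\xi+\alpha_k\hat G(\xi))$ --- the Fourier transform of the causal Green function of~\eqref{eq:spatial-gle:mode} --- extends holomorphically to the LHP. Closing the real contour with a semicircle of radius $R$ in the LHP and using that $\hat G(\xi)\to 0$ as $|\xi|\to\infty$ there, the arc integrand $i\xi\,\hat g_k(\xi)$ converges uniformly to $1$ and contributes $-\pi$ in the limit; since the interior is pole-free, Cauchy's theorem delivers $\int_\rbb \hat g_k(\xi)\,\d\xi = \pi$, and taking real parts (using that $g_k$ is real, so $\Im\hat g_k$ is odd) completes the proof. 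The hard part is precisely that $\hat g_k(\xi)\sim 1/(i\xi)$ decays only like $|\xi|^{-1}$, so Jordan's lemma does not apply and the arc contributes a nonzero constant that must be harvested carefully; the completely monotone assumption is essential here, both to rule out zeros of $i\xi+\alpha_k\hat G(\xi)$ in the LHP and to force $\hat G$'s vanishing at infinity in a sufficiently uniform sense. A minor secondary task is to confirm that the spectral characterization of $V_k$ used above genuinely matches Definition~\ref{def:GLE:weak-solution}, which I expect to be a standard fact about stationary Gaussian distributions recorded in Section~\ref{sec:prelim}.
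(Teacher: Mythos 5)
Your proposal tracks the paper's own argument very closely: the mode-by-mode reduction via Proposition~\ref{prop:spatial-gle:mode:weak-solution}--\ref{prop:spatial-gle:mode:process}, the reduction of well-posedness to the identity $\int_\rbb \rho_k = \lambda_k^2/\alpha_k$ (Lemma~\ref{lem:int.rho_k=lamda/alpha}), and, crucially, the lower-half-plane contour argument exploiting that $(f_k)^{-1} = \alpha_k\widehat{K^+} + i\,\cdot$ has strictly positive real part there (Lemma~\ref{lem:f_k}), with the large arc contributing $-\pi$ in the limit. The algebraic reduction $\widehat{K} = 2\Kcos$, the rewriting of $\rho_k$ as $\pi^{-1}\lambda_k^2\alpha_k^{-1}\Re f_k$ on the real line, and the use of evenness to drop the imaginary part are all as in the paper.

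The one step you skate past is the origin. The paper proves analyticity only on $\cbb^-\setminus\{0\}$ and uses a keyhole contour $[-R,-1/R]\cup C_{1/R}\cup[1/R,R]\cup C_R$ precisely because $\widehat{K^+}(\omega)$ need not be finite (let alone analytic) at $\omega = 0$: by Lemma~\ref{lem:complete-monotone:fourier}, $\Kcos(\omega) = \int_0^\infty x(x^2+\omega^2)^{-1}\mu(\d x)$ may diverge as $\omega\to 0$ when $\int_0^\infty x^{-1}\mu(\d x)=\infty$. Your contour, a single semicircle closing $[-R,R]$, passes through this point. To invoke Cauchy's theorem you need, at minimum, that $f_k$ extends continuously (or is bounded) near $0$ in the closed half-disk. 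This is in fact true --- when $\int_0^\infty x^{-1}\mu(\d x)<\infty$ the DCT gives a finite nonzero limit, and when it diverges Fatou forces $\Re\widehat{K^+}\to\infty$ so $f_k\to 0$ --- but you neither state nor verify it. As written, your invocation of Cauchy's theorem is not justified; the paper's choice to indent around the origin and show explicitly that the inner arc $I_2(R)\to 0$ is the clean way to dispose of this.

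A secondary point worth making explicit: you assert $\E|\langle V_k,\delta_t\rangle|^2 = \frac{1}{2\pi}\int_\rbb S_k$, tacitly assuming $S_k\in L^1(\rbb)$ so that $\delta_t\in\dom(V_k)$ in the first place. This is exactly what Proposition~\ref{prop:spatial-gle:mode:process}(a) asserts and what the contour computation proves (the integral is finite because it equals $\lambda_k^2/\alpha_k$), but the logical order needs care: integrability of $\rho_k$ must come first, or be established simultaneously. The paper handles this by first proving $\rho_k\in L^1$ (a standalone claim in Proposition~\ref{prop:spatial-gle:mode:process}, cited from~\cite{mckinley2018anomalous}), then computing its integral.
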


\begin{remark}
We note that in Theorem~\ref{thm:spatial-gle:well-posed}, we do not address the pathwise uniqueness as commonly found in SPDEs. Since stationary processes are characterized by their autocovariance function and spectral densities~\cite{cramer2013stationary,ito1954stationary}, later on in Section~\ref{sec:main-results}, we will see that stationary solutions $u(t,\xbf)$ of~\eqref{eq:spatial-gle} admit the unique covariance representation
$$\Enone[u(t,\xbf)u(s,\ybf)]=\sum_{k\ge 1}\int_\rbb e^{i(t-s)\omega }\rho_k(\omega)\d\omega \,e_k(\xbf)e_k(\ybf),$$
where $\rho_k(\omega)$ given by~\eqref{eq:spectral-density} is the spectral density of $u_k(t)$ as in Proposition~\ref{prop:spatial-gle:mode:process} below. See also Proposition~\ref{prop:spatial-gle:mode:weak-solution}.
\end{remark} 

The proof of Theorem~\ref{thm:spatial-gle:well-posed} makes use of a Fourier analysis on the memory kernels and will be carried out in Section~\ref{sec:main-results}. In particular, we will generalize results in~\cite{hohenegger2017equipartition,hohenegger2017fluid}, where $K$ has the form as a sum of exponentials, to calculate explicitly the second moment of $u_k(t)$ as in the decomposition $u(t,\xbf)=\sum_{k\ge 1}u_k(t)e_k(\xbf)$.

 We now state the main result of the paper on the regularity of~\eqref{eq:spatial-gle}.
\begin{theorem}[Time and space regularity]\label{thm:spatial-gle:Holder} Suppose that Assumptions~\ref{cond:A}, \ref{cond:K} and \ref{cond:alpha-k:Holder} are satisfied. Let $u(t,\xbf)$ be the solution of~\eqref{eq:spatial-gle} as in Theorem~\ref{thm:spatial-gle:well-posed} and $\eta$ be the constant from Assumption~\ref{cond:alpha-k:Holder}. Then there exists a modification $U(t,\xbf)$ of $u(t,\xbf)$ such that $U$ is $\gamma-$H\"older continuous in time and space for any $\gamma\in(0,1-\eta)$.
\end{theorem}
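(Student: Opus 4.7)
The plan is to invoke Kolmogorov's continuity criterion on the product space $\rbb\times\domain$ applied to the centered Gaussian field $u$. Because all finite--dimensional marginals of $u$ are jointly Gaussian, moment equivalence gives
$$
\mathbb{E}|u(t,\xbf)-u(s,\ybf)|^{p}\ \le\ C_{p}\,\bigl(\mathbb{E}|u(t,\xbf)-u(s,\ybf)|^{2}\bigr)^{p/2}
$$
for every $p\ge 2$, and so letting $p\to\infty$ in Kolmogorov reduces the theorem to the single second--moment estimate
$$
\mathbb{E}|u(t,\xbf)-u(s,\ybf)|^{2}\ \le\ C_{\gamma}\bigl(|t-s|^{2\gamma}+|\xbf-\ybf|^{2\gamma}\bigr)\qquad\text{for every }\gamma\in(0,1-\eta).
$$

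The first concrete step is to exploit mutual independence and $\mathbb{E}u_{k}(t)=0$ from Definition~\ref{def:spatial-gle:solution} to kill cross terms, and then add and subtract $u_{k}(s)e_{k}(\xbf)$ inside the square to obtain the clean split
$$
\mathbb{E}|u(t,\xbf)-u(s,\ybf)|^{2}\ \le\ 2\sum_{k\ge 1}\mathbb{E}\bigl(u_{k}(t)-u_{k}(s)\bigr)^{2}e_{k}(\xbf)^{2}\ +\ 2\sum_{k\ge 1}\mathbb{E}u_{k}(s)^{2}\,\bigl(e_{k}(\xbf)-e_{k}(\ybf)\bigr)^{2}.
$$
For the spatial sum, interpolating between $|e_{k}(\xbf)-e_{k}(\ybf)|\le 2Mc_{k}$ and $|e_{k}(\xbf)-e_{k}(\ybf)|\le M\alpha_{k}^{1/2}c_{k}\,|\xbf-\ybf|$ from Assumption~\ref{cond:A} gives
$$
\bigl(e_{k}(\xbf)-e_{k}(\ybf)\bigr)^{2}\ \le\ 4M^{2}c_{k}^{2}\,\alpha_{k}^{\gamma}\,|\xbf-\ybf|^{2\gamma}\qquad\text{for any }\gamma\in[0,1].
$$
Combined with the stationary variance bound $\mathbb{E}u_{k}(s)^{2}\lesssim\lambda_{k}^{2}/\alpha_{k}$ -- which I expect to fall out of the spectral computation behind Theorem~\ref{thm:spatial-gle:well-posed} as the Fluctuation--Dissipation analog of equipartition -- this bounds the spatial piece by $C\,|\xbf-\ybf|^{2\gamma}\sum_{k}\lambda_{k}^{2}c_{k}^{2}/\alpha_{k}^{1-\gamma}$, which is finite for every $\gamma\le 1-\eta$ by Assumption~\ref{cond:alpha-k:Holder}.

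The main obstacle is the temporal sum. Using the spectral density $\rho_{k}$ of the stationary mode $u_{k}$, Bochner's theorem gives
$$
\mathbb{E}\bigl(u_{k}(t)-u_{k}(s)\bigr)^{2}\ =\ 2\int_{\rbb}\bigl(1-\cos(\omega(t-s))\bigr)\rho_{k}(\omega)\,\d\omega,
$$
and the elementary inequality $1-\cos(\omega\tau)\le 2|\omega\tau|^{2\gamma}$ for $\gamma\in[0,1]$ reduces the task to establishing the uniform--in--$k$ bound
$$
\int_{\rbb}|\omega|^{2\gamma}\rho_{k}(\omega)\,\d\omega\ \le\ \frac{C_{\gamma}\lambda_{k}^{2}}{\alpha_{k}^{1-\gamma}}\qquad\text{for every }\gamma\in(0,1).
$$
The plan is to exploit the explicit form of $\rho_{k}$, which will read (up to normalization) as $\rho_{k}(\omega)=\lambda_{k}^{2}\Re\widehat K(\omega)/|i\omega+\alpha_{k}\widehat K(\omega)|^{2}$ in terms of the one--sided Fourier transform of $K$ supplied by Lemma~\ref{lem:complete-monotone:fourier}. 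The Stieltjes representation inherited from complete monotonicity forces $\Re\widehat K\ge 0$ and provides matched quantitative control on $\Re\widehat K$ and $\Im\widehat K$ at both low and high frequencies. The intended route is then to split the $\omega$--integral at a crossover frequency determined by the balance between $\alpha_{k}\widehat K(\omega)$ and $\omega$, bound the denominator of $\rho_{k}$ by $(\alpha_{k}\Re\widehat K(\omega))^{2}$ on the low--frequency side and by $\omega^{2}$ on the high--frequency side, and then rescale $\omega$ by an appropriate power of $\alpha_{k}$ to pull out the factor $\alpha_{k}^{\gamma-1}$. The delicate point is making the constant $C_{\gamma}$ uniform in $k$ \emph{without} assuming that $K$ is integrable, so that power--law decaying kernels -- a main motivation of the paper -- are covered.

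Assembling the two pieces yields
$$
\mathbb{E}|u(t,\xbf)-u(s,\ybf)|^{2}\ \le\ C_{\gamma}\bigl(|t-s|^{2\gamma}+|\xbf-\ybf|^{2\gamma}\bigr)\sum_{k\ge 1}\frac{\lambda_{k}^{2}c_{k}^{2}}{\alpha_{k}^{1-\gamma}},
$$
and since $\alpha_{k}\ge\alpha_{1}>0$ is nondecreasing, the series is dominated by $\alpha_{1}^{\eta-(1-\gamma)}\sum_{k}\lambda_{k}^{2}c_{k}^{2}/\alpha_{k}^{\eta}$ whenever $1-\gamma\ge\eta$. Kolmogorov's theorem, with moments pushed to any desired order via Gaussianity, then produces the Hölder--continuous modification $U$ with joint exponent arbitrarily close to, but strictly less than, $1-\eta$ in both time and space.
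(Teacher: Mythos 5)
Your architecture — Kolmogorov's criterion via Gaussian moment equivalence, the split into temporal and spatial increments, expressing the temporal increment through the spectral density $\rho_k$, and the elementary bound $1-\cos(x)\le C|x|^{2\gamma}$ — is exactly the route the paper takes (Proposition~\ref{prop:spatial-gle:mode:Holder}, Corollary~\ref{cor:Holder_space}, and then the proof of the theorem). The spatial piece goes through as you write it; your use of $\Enone u_k^2=\lambda_k^2/\alpha_k$ (the paper's Lemma~\ref{lem:int.rho_k=lamda/alpha}) is in fact marginally sharper than the variance bound the paper takes from Corollary~\ref{cor:Holder_space}.

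The gap is in the temporal piece, and it is the heart of the proof. You propose to bound the denominator $\alpha_k^2\Kcos^2(\omega)+(\omega-\alpha_k\Ksin(\omega))^2$ of $\rho_k$ from below by $(\alpha_k\Kcos(\omega))^2$ on the low side of a single crossover frequency and by $\omega^2$ on the high side. This misses the resonance forced by the Fluctuation--Dissipation structure: the imaginary part $\omega-\alpha_k\Ksin(\omega)$ vanishes at a frequency $\omega_k$ with $\omega_k^2/\alpha_k\to K(0)$ (Lemma~\ref{lem:complete-monotone:fourier-analysis}(c)), so $\omega_k\to\infty$ with $k$, and at $\omega=\omega_k$ the denominator collapses to $\alpha_k^2\Kcos^2(\omega_k)$. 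Since $\omega\Kcos(\omega)$ is bounded while $\omega^2\Kcos(\omega)$ is only increasing, $\Kcos(\omega_k)$ can decay like $\omega_k^{-2}$ (for $K(t)=e^{-|t|}$ one has $\Kcos(\omega)=(1+\omega^2)^{-1}$ exactly), making $\alpha_k^2\Kcos^2(\omega_k)=O(1)$ while $\omega_k^2\asymp\alpha_k\to\infty$. Your high--frequency lower bound $\ge\omega^2$ therefore fails by an unbounded factor on a window around $\omega_k$ of width $\gg1$, and no placement of a single crossover avoids it, since that window is exactly where the dominant contribution lives. The paper handles it by proving a linear lower bound near the resonance,
\begin{equation*}
\Bigl|\alpha_k\frac{\Ksin(\omega)}{\omega}-1\Bigr|\;\geq\;\frac{c}{\omega_k}\,|\omega-\omega_k|,
\qquad \omega\in[\omega_k-\omega_k^q,\,\omega_k+\omega_k^q],
\end{equation*}
extracted from the Stieltjes representation of $K\in\CMb$ (Lemma~\ref{lem:complete-monotone:fourier-analysis}(d); this is precisely where $K(0)<\infty$ is indispensable, cf.~Remark~\ref{rem1}), and then using a five--region decomposition of the spectral integral around $\omega_k$ with tailored estimates in each region to reach the uniform--in--$k$ bound $c\lambda_k^2/\alpha_k^{q-\alpha/2}$ of Proposition~\ref{prop:spatial-gle:mode:Holder}. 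Your sketch contains no substitute for this resonance analysis, so the central estimate $\int|\omega|^{2\gamma}\rho_k(\omega)\,\d\omega\lesssim\lambda_k^2/\alpha_k^{1-\gamma}$ is left unproved.
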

The proof of Theorem~\ref{thm:spatial-gle:Holder} carried out in Section~\ref{sec:main-results} will employ the classical Kolmogorov criterion on space--time regularity of random fields, which in turn relies heavily on delicate estimates on difference of solutions of~\eqref{eq:spatial-gle}. In Section~\ref{sec:discuss}, we will detail application of Theorem~\ref{thm:spatial-gle:Holder} to stochastic heat equations. In particular, one will see that while space regularity remains the same, time regularity of~\eqref{eq:spatial-gle} is smoother than those of~\eqref{eqn:stochastic-da-prato} and \eqref{eqn:stochastic-heat}.

\section{Mathematical Preliminaries}  \label{sec:prelim}

Throughout the rest of the paper, we will use $C$ and $c$ to denote generic positive constants, whose values may change from one line to the next. When the dependence on parameters is important, this will be indicated in parenthesis, e.g., $c(\alpha,q)$ depends on parameters $\alpha$ and $q$.
\subsection{Weak solutions of the GLE} \label{sec:GLE:weak-solution} In order to define weak solutions of~\eqref{eq:spatial-gle:mode}, for the reader's convenience, we briefly review the framework in~\cite{mckinley2018anomalous}. For given $\lambda,\,\beta>0$, we consider a stochastic process $v:\rbb\to\rbb$ governed by the formal stochastic integro--differential equation
\begin{equation} \label{eq:GLE}
\dot{v}(t)=-\beta\int_{-\infty}^t \! \! \! \! K(t-s)v(s)\d s+\lambda F(t),
\end{equation}
where $F(t)$ is a stationary Gaussian process whose autocovariance function is $K(t)$. Our methods rely heavily on Fourier analysis for spectral functions and so for a function $f:\rbb\to\cbb$, we define the Fourier transform of $f$ in the sense of improper integrals as
\begin{displaymath}
\widehat{f}(\omega)=\int_\rbb f(t) e^{-it\omega} \d t.
\end{displaymath}
If a function $K(t)\in L^1_{\text{loc}}(\rbb)$ is symmetric and eventually decreasing to zero as $t\to\infty$, then the above integral is well--defined for every $\omega\neq 0$ \cite{soni1975slowly}. Let $\Sc$ be the class of Schwarz functions defined on $\rbb$ and $\Sc'$ be the space of tempered distributions on $\Sc$. The action of $f\in\Sc'$ on $\f\in\Sc$ is denoted by $\la f,\f\ra$. Also, the Fourier transform $\F{f}$ of $f\in\Sc'$ in the sense of distributions is defined as follows:
\begin{align*}
\forall\f\in\Sc,\qquad\la \F{f},\f\ra:=\la f,\widehat{\f} \ra .
\end{align*}
It is well known that the Fourier map $\mathcal{F}:\Sc'\to\Sc'$ is a one--to--one relation. We begin with the definition of a (weak) stationary process \cite{cramer2013stationary}.
\begin{definition} \label{def:stationary-process}
 A stochastic process $\{F(t)\}_{t \in \rbb}$ is mean--square continuous and stationary if for all $t, s\in\rbb$,
		\begin{enumerate}[(a)]
		\item $\E\lvert F(t) \rvert^2<\infty$ and $\lim_{h\to 0}\E\lvert F(t+h)-F(t) \rvert^2 =0$;
		\item $\E[F(t)]=a$, for some constant $a$ (we may assume $a=0$); and
		\item the covariance function $\E[F(t)\overline{F(s)}]$ depends only on the difference $(t-s)$.
		\end{enumerate}
\end{definition}

By Bochner's Theorem, a stationary process $F$ is characterized by a positive definite function $r$ and a finite Borel measure $\nu$ such that the following holds for all $t,\,s\in\rbb$ \cite{cramer2013stationary}: 
\begin{align}\label{eq:stationary-process}
\Enone\big[F(t)\overline{F(s)}\big]=r(t-s)=\int_\rbb e^{i(t-s)y}\nu(\d y).
\end{align}
In the above, $r$ is called the \emph{covariance} and $\nu$ is called the \emph{spectral measure} of $F$. Furthermore, there always exists a modified version $\widetilde{F}$ of $F$ such that $\widetilde{F}$ is Gaussian \cite{cramer2013stationary}. A generalization of stationary processes is the class of stationary random distributions, introduced by It\^{o}'s seminal work \cite{ito1954stationary}. Denote by $\tau_h$, the shift transform on $\Sc$, $\tau_h\varphi(x):=\varphi(x+h)$.
\begin{definition} A linear functional $F:\Sc\to L^2(\Omega)$, the space of all complex--valued random variables with finite variance, is called a {\it stationary random distribution} on $\Sc$ if for all $h\in\rbb$, $\varphi_1,\varphi_2\in\Sc$,
\begin{align*}
\E\big[\langle  F,\tau_h \varphi_1\rangle \overline{\langle F,\tau_h\varphi_2  \rangle}\big]=\E\big[\langle  F,\varphi_1\rangle \overline{\langle F,\varphi_2  \rangle}\big].
\end{align*}
\end{definition}

Next, we recall the definition of a positive definite tempered distribution $f$ \cite{ito1954stationary}, which is analogous to the positive definiteness of real--valued functions.
\begin{definition} \label{def:temper-distribution:positive-def} A tempered distribution $f\in\Sc'$ is called {\it positive definite} if for any $\varphi\in \Sc$,
\begin{align*}
 \langle f,\varphi*\widetilde{\varphi}\rangle \geq 0, 
\end{align*}
where $\widetilde{\varphi}(x)=\varphi(-x)$.
\end{definition}

A stationary random distribution $F$ is characterized by its associated positive definite tempered distribution $r$ and a non--negative measure $\nu$ in the following sense: (\cite{ito1954stationary})
\begin{align} \label{eq:stationary-random-distibution}
\forall\f_1,\,\f_2\in\Sc,\qquad\Enone\Big[\la F,\f_1\ra\overline{\la F,\f_2\ra}\Big] = \la r,\f_1*\widetilde{\f_2}\ra= \int_\rbb \overline{\widehat{\f_1}(y)}\widehat{\f_2}(y)\nu(\d y),
\end{align}
where $\nu$ satisfies
\begin{equation}\label{ineq:spectral-measure}
	\int_\rbb \frac{\nu( \d x  )}{(1+x^2)^k}<\infty,
\end{equation}
for some integer $k$. Furthermore, if $\nu$ is absolutely continuous with respect to Lebesgue measure on $\rbb$, then we are able to extend the above stationary random distribution $F$ to a \emph{generalized random distribution} $\Phi:\dom(\Phi)\subset\Sc'\to L^2(\Omega)$ such that the following holds
\begin{align} \label{eq:generalized-random-distribution}
\Enone\Big[\la \Phi, f_1\ra \overline{\la \Phi,f_2\ra}\Big] =  \int_\rbb\overline{\F{f_1}(y)}\F{f_2}(y)\nu( \d y ).
\end{align}
Here, the domain of $\Phi$ consists of those tempered distributions $f$ such that $\F{f}\in L^2(\nu)$ where 
\begin{align*}
L^2(\nu)=\big\{g:\rbb\to\cbb,\int_\rbb |g(y)|^2\nu( \d y )<\infty\big\}.
\end{align*}
Comparing~\eqref{eq:generalized-random-distribution} with~\eqref{eq:stationary-random-distibution}, we readily see that $\Phi$ is an extension of $F$ since $\Sc\subset\dom(\Phi)$, see~\cite{mckinley2018anomalous} for a more detailed discussion. We now can define the function--valued version of $\Phi$ as follows.
\begin{definition} \label{def:velocity} Let $\Phi$ be a random operator associated with a measure $\nu$ as in~\eqref{eq:generalized-random-distribution}. Let $\delta_t$ be the Dirac $\delta-$distribution centered at $t$. If $\delta_t$ and $1_{[0,t]}$ are in $\emph{\dom}(\Phi)$, then we define
\begin{align*}
v(t):=\la \Phi,\delta_t\ra,\quad\text{ and }\quad x(t):=\la \Phi,1_{[0,t]}\ra.
\end{align*}  
We note that $x(t)$ can be defined without $v(t)$.
\end{definition}

In view of~\cite[Lemma 2.17]{mckinley2018anomalous}, $\delta_t\in\dom(\Phi)$ if and only if the representation measure $\nu$ is finite. Moreover $v(t)$ is a stationary Gaussian process, which is consistent with~\eqref{eq:stationary-process}. We now turn our attention to weak solutions of~\eqref{eq:GLE}. Formally, we can multiply both sides of~\eqref{eq:GLE} by a Schwarz function $\f$, then perform an integration by parts on the left--hand side and a change of variable on the convolution term on the right--hand side to arrive at
\begin{align*}
-\int_\rbb v(t)\f'(t)\d t=-\beta\int_\rbb v(t)\int_\rbb K^+(y)\f(t+y) \d y \d t+\lambda\int_\rbb F(t)\f(t)\d t,
\end{align*}
where $K^+(t):=K(t)1_{\{t\geq 0\}}$. Bringing the convolution to the left--hand side now yields
\begin{align*}
\int_\rbb v(t)\big[-\f'(t)+\beta\widetilde{ K^+*\widetilde{\f} }(t)\d t  \big]\d t=\lambda\int_\rbb F(t)\f(t)\d t.
\end{align*}
Having introduced the notion of generalized random distributions, we rewrite the above equation in the following weak form
\begin{align}
\la V,-\f'+\beta \widetilde{ K^+*\widetilde{\f} }\ra = \lambda\la F,\f\ra,
\end{align}
where $\widetilde{f}(x):=f(-x)$. In the above, the LHS is understood as an action of a generalized random distribution $V$ applied to an element in its domain, whereas the RHS is the usual action of the stationary random distribution $F$ applied to $\f\in\Sc$. Furthermore, the random distribution $F$ is characterized by its covariance structure
\begin{align}\label{form:F:random-distribution}
\forall\f_1,\f_2\in\Sc,\qquad \Enone\Big[\la F,\f_1\ra\overline{\la F,\f_2\ra}\Big]=  \int_\rbb K(t)(\f_1*\widetilde{\f_2})(t)\d t.
\end{align}
\begin{remark} In general, a real--valued function $K$ can be regarded as a tempered distribution by setting
\begin{align*}
\la K,\f\ra:=\int_\rbb K(t)\f(t)\d t,
\end{align*}
for any $\f\in\Sc$. The above integral always converges as long as $K$ belongs to $ L^1_{\text{loc}}(\rbb)$ and does not grow exponentially fast \cite{strichartz2003guide}. In addition, if $K$ satisfies Assumption~\ref{cond:K}, then $K$ is indeed a positive definite tempered distribution \cite{mckinley2018anomalous}, which in turn implies that $F$ as in~\eqref{form:F:random-distribution} is a stationary random distribution.
\end{remark}
 
With the above observation, we have the following definition of a weak solution of~\eqref{eq:GLE}.
\begin{definition} \label{def:GLE:weak-solution}
\cite{mckinley2018anomalous} Let $\nu$ be a non-negative measure satisfying condition \eqref{ineq:spectral-measure} and $V$ be the operator associated with $\nu$ via the relation~\eqref{eq:generalized-random-distribution}. Then $V$ is a \emph{weak stationary solution} for Equation \eqref{eq:GLE} if $V$ satisfies the following conditions.
\begin{enumerate}[(a)]
\item For all $\varphi\in\Sc$, $K^+*\varphi$ belongs to $\text{Dom}(V)$.
\item For any $\varphi,\psi\in \Sc$, it holds that
\begin{align*}
\Enone\bigg[\langle V,-\varphi'+\beta\widetilde{K^+*\widetilde{\varphi}}\rangle\overline{\langle V,-\psi'+\beta\widetilde{K^+*\widetilde{\psi}}\rangle}\bigg]=\lambda^2\E\big[\langle F,\varphi\rangle \overline{\langle F,\psi\rangle}\big].
\end{align*}
\end{enumerate}
\end{definition}
Using this definition, we will address the well--posedness of~\eqref{eq:spatial-gle:mode} in Section~\ref{sec:main-results}.

\subsection{Completely Monotone Functions}\label{sec:complete-monotone} In this subsection, we collect several properties of the class of \emph{completely monotone functions} that are needed for the analysis of the regularity of stationary solutions of~\eqref{eq:spatial-gle}. It is known that such a function $K$ can be characterized in terms of the Laplace transform of Radon measures.
\begin{theorem}[Hausdorff--Bernstein--Widder Theorem]\label{thm3} A function $K$ is completely monotone if and only if $K$ admits the formula
\begin{equation} \label{eq:complete-monotone:laplace}
K(t)=\int_0^\infty \!\!\! e^{-tx} \mu (\d x  ),
\end{equation}
where $\mu$ is a positive measure on $[0,\infty)$. 
\end{theorem}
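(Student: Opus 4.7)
The plan is to handle the two directions of this equivalence separately; the $\Leftarrow$ direction is a direct differentiation under the integral sign, while the $\Rightarrow$ direction is the classical Bernstein theorem, which I would establish through an approximation procedure combined with Helly's selection theorem.

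For $\Leftarrow$, given $K(t)=\int_0^\infty e^{-tx}\mu(\d x)$, I would justify differentiating $n$ times under the integral by dominated convergence: for any fixed $t_0>0$ and $t\geq t_0$, the integrand $x^n e^{-tx}$ is dominated (up to a constant depending on $n$ and $t_0$) by a multiple of $e^{-t_0 x/2}$, which is $\mu$-integrable because $K(t_0/2)<\infty$. This yields
\[
K^{(n)}(t)=\int_0^\infty (-x)^n e^{-tx}\mu(\d x),
\]
so $(-1)^n K^{(n)}(t)\geq 0$ for all $n\geq 0$ and $t>0$, which is precisely complete monotonicity.

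For $\Rightarrow$, the strategy is to build a sequence of non-negative Radon measures $\{\mu_n\}_{n\geq 1}$ on $[0,\infty)$ whose Laplace transforms approximate $K$, and then extract a weak limit. One concrete construction, following the Post--Widder inversion idea, defines $\mu_n$ as a step measure on the grid $\{k/n\}_{k\geq 0}$ with masses proportional to scaled finite differences of $K$; complete monotonicity is precisely what guarantees these masses are non-negative. The analytic heart of the argument is the pointwise convergence of Laplace transforms,
\[
\int_0^\infty e^{-sx}\mu_n(\d x)\longrightarrow K(s),\qquad s>0,
\]
based on the fact that the smoothed finite differences of $K$ recover it in an averaged sense. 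Once this is in hand, the elementary bound $\mu_n([R,\infty))\leq e^{sR}K(s)$ (valid for any fixed $s>0$ and $R$) yields tightness of $\{\mu_n\}$ on $[0,\infty)$. Helly's selection theorem then supplies a vaguely convergent subsequence $\mu_{n_j}\to\mu$, and passing to the limit in the Laplace transform --- legitimate by tightness together with the exponential decay of $e^{-tx}$ at infinity --- produces the desired representation $K(t)=\int_0^\infty e^{-tx}\mu(\d x)$. Uniqueness of $\mu$ is automatic from injectivity of the Laplace transform on positive Radon measures.

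The main obstacle is proving the pointwise Laplace convergence displayed above: it is the core content of the Post--Widder inversion formula and demands delicate bookkeeping of how the discrete finite differences of $K$ approximate the derivatives $K^{(k)}$ when tested against $e^{-sx}$. A cleaner alternative, which bypasses explicit inversion formulas, is to reduce to the Hausdorff moment problem via the change of variable $y=e^{-x}$: this transforms complete monotonicity of $K$ on $(0,\infty)$ into the alternating finite-difference condition characterizing moment sequences on $[0,1]$, and Hausdorff's theorem then directly supplies the representing measure, which pulls back to the $\mu$ in the statement.
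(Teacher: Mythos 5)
The paper does not prove this statement: it cites the Hausdorff--Bernstein--Widder theorem by name as a classical result and proceeds immediately to the notation $\CM$, $\CM_b$ and Lemma~\ref{lem:complete-monotone:fourier}. There is therefore no in-paper proof to compare against, and your task was effectively to reconstruct a textbook proof of Bernstein's theorem.

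Your sketch is broadly sound. The $\Leftarrow$ direction via dominated differentiation under the integral, with the domination $x^n e^{-tx}\le C_{n,t_0}e^{-t_0 x/2}$ for $t\ge t_0>0$, is correct and complete. For the $\Rightarrow$ direction, the Post--Widder / Helly strategy is a standard and valid route, and the alternative reduction to the Hausdorff moment problem via $y=e^{-x}$ is also classical. A few points deserve care, however. First, the displayed compactness estimate $\mu_n([R,\infty))\le e^{sR}K(s)$ is false as written and in any case useless (it grows in $R$); what you actually want, and what the computation actually gives, is the \emph{local} boundedness estimate $\mu_n([0,R])\le e^{sR}\int_0^\infty e^{-sx}\mu_n(\d x)$, which supplies vague precompactness, not tightness. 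Since $\mu$ need not be finite (only that $K(t)<\infty$ for $t>0$, so that, e.g., $K(t)=t^{-\alpha}$ with unbounded $\mu$ is allowed), tightness cannot hold and is not what you need. To pass to the limit in the Laplace transform at $t>0$ you should instead fix $0<s<t$, bound the tail uniformly in $n$ by $\int_R^\infty e^{-tx}\mu_n(\d x)\le e^{-(t-s)R}\int_0^\infty e^{-sx}\mu_n(\d x)$, send $R\to\infty$, and then use vague convergence on $[0,R]$. You gesture at this but the wording conflates tightness with local boundedness. Second, the moment-problem alternative is not as immediate as you suggest: Hausdorff's theorem only produces a measure $\nu$ on $[0,1]$ whose moments agree with $K$ at integer points, and upgrading this to $K(t)=\int_0^1 y^t\nu(\d y)$ for all real $t>0$ requires an additional uniqueness argument (e.g.\ Carlson's theorem, or a limiting argument over grids $\{nh\}$ with $h\to 0$). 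Neither issue is fatal, but both are genuine gaps in the write-up as it stands.
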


It is convenient to denote
\begin{enumerate}[(a)]
\item $\mathcal{CM}\label{CM}$, the class of all \textit{completely monotone} functions; and
\item $\mathcal{CM}_b\label{CMb}$, the class of all $K\in \mathcal{CM}$ such that the measure $\mu$ in~\eqref{eq:complete-monotone:laplace} is finite.
\end{enumerate} 
\begin{remark} \label{rem:CMb}
 Notice that if $K\in \mathcal{CM}_b$, by setting $K(0):=\int_0^\infty \mu( \d x  )=\mu([0,\infty))$, $K$ can be extended to be continuous on $[0,\infty)$. Hence, $\CM_b=C[0,\infty)\cap\CM$. In view of Assumption~\ref{cond:K}, the class of kernels that we consider is a subset of $\mathcal{CM}_b$.
\end{remark}
We now turn to Fourier transforms of $\CMb$ functions.
\begin{lemma} \label{lem:complete-monotone:fourier}
Suppose that $K\in\CM_b$ and that $K$ is decreasing to $0$ as $t\to\infty$. Let $\mu$ be the representation measure as in~\eqref{eq:complete-monotone:laplace}. Then for every $\omega\neq 0$, it holds that
\begin{equation} \label{eq:complete-monotone:fourier}
\int_0^\infty\close K(t)e^{-it\omega}\d t =\int_0^\infty\close \frac{x}{x^2+\omega^2}\mu( \d x  )-i\int_0^\infty\close \frac{\omega}{x^2+\omega^2}\mu( \d x  ).
\end{equation} 
\end{lemma}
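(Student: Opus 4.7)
\textbf{Proof plan for Lemma~\ref{lem:complete-monotone:fourier}.} The strategy is to insert the Hausdorff–Bernstein–Widder representation of $K$ into the (improper) Fourier integral, swap the two integrations on a finite time window, and then pass to the limit using dominated convergence. The key observation, already remarked in the excerpt, is that the Fourier integral must be read as $\lim_{T\to\infty}\int_0^T K(t)e^{-it\omega}\d t$ since $K$ need not lie in $L^1([0,\infty))$.

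First I would fix $\omega\neq 0$ and $T>0$, and write, using Theorem~\ref{thm3} together with $K\in\CMb$ (so $\mu$ is finite),
\begin{equation*}
\int_0^T K(t)e^{-it\omega}\d t=\int_0^T\!\!\int_0^\infty e^{-tx}\mu(\d x)\,e^{-it\omega}\d t.
\end{equation*}
Since $|e^{-t(x+i\omega)}|=e^{-tx}\leq 1$ on $[0,T]\times[0,\infty)$ and $\mu$ is finite, the Fubini–Tonelli theorem applies and yields
\begin{equation*}
\int_0^T K(t)e^{-it\omega}\d t=\int_0^\infty\frac{1-e^{-T(x+i\omega)}}{x+i\omega}\mu(\d x).
\end{equation*}

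Next I would send $T\to\infty$ inside the $\mu$-integral. For $x>0$, $e^{-T(x+i\omega)}\to 0$ and the integrand converges pointwise to $(x+i\omega)^{-1}$. To handle the atom at $x=0$, I would invoke monotone convergence in the representation $K(t)=\int_0^\infty e^{-tx}\mu(\d x)$: since $K(t)\to 0$ as $t\to\infty$ by hypothesis and $e^{-tx}\downarrow \mathbf{1}_{\{x=0\}}$, we conclude $\mu(\{0\})=0$. Hence the pointwise limit holds $\mu$-almost everywhere. The dominating function is the uniform bound
\begin{equation*}
\left|\frac{1-e^{-T(x+i\omega)}}{x+i\omega}\right|\leq\frac{2}{\sqrt{x^2+\omega^2}}\leq\frac{2}{|\omega|},
\end{equation*}
which is integrable against the finite measure $\mu$. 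Dominated convergence therefore gives
\begin{equation*}
\int_0^\infty K(t)e^{-it\omega}\d t=\int_0^\infty\frac{1}{x+i\omega}\mu(\d x)=\int_0^\infty\frac{x-i\omega}{x^2+\omega^2}\mu(\d x),
\end{equation*}
and separating real and imaginary parts yields~\eqref{eq:complete-monotone:fourier}.

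The routine calculations are Fubini and the algebraic identity $(x+i\omega)^{-1}=(x-i\omega)/(x^2+\omega^2)$; the only nontrivial point is the limit as $T\to\infty$, which is where the two hypotheses ($K$ decreasing to $0$ and $\mu$ finite, i.e., $K\in\CMb$) together with $\omega\neq 0$ are essential. The restriction $\omega\neq 0$ enters precisely in the $|\omega|^{-1}$ dominating bound; at $\omega=0$ the formal limit reduces to $\int_0^\infty x^{-1}\mu(\d x)$, which is exactly $\int_0^\infty K(t)\d t$ and may well diverge, consistent with the slow power–law decay admitted by Assumption~\ref{cond:K}.
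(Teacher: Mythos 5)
Your proof is correct and follows essentially the same route as the paper: the Hausdorff--Bernstein--Widder representation, Fubini on a finite time window, the observation that $K(t)\to 0$ forces $\mu(\{0\})=0$, and dominated convergence as $T\to\infty$. The only cosmetic difference is that you apply dominated convergence once to the complex-valued integrand with the uniform bound $2/|\omega|$, whereas the paper separates real and imaginary parts first and invokes DCT twice; your version is slightly cleaner and also sidesteps a small typo in the paper's stated dominating function for the imaginary part (which should involve $|\omega|$, not $x$, in the numerator).
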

\begin{proof} We first note that since $\mu$ is a finite measure, this implies that
\begin{align*}
\int_0^\infty\close \frac{x}{x^2+\omega^2}\mu( \d x  ) \leq \frac{1}{2|\omega|}\int_\rbb\mu( \d x    )< \infty,
\end{align*}
and that
\begin{align*}
\int_0^\infty\close \frac{|\omega|}{x^2+\omega^2}\mu( \d x  ) \leq \frac{1}{|\omega|}\int_\rbb\mu( \d x    )<\infty.
\end{align*}
In addition, $K(t)$ decreasing to 0 as $t\to\infty$ implies that
\begin{equation*}
\mu(\{0\})=\lim_{t\to\infty}\int_0^\infty\close e^{-tx}\mu(\d x )=\lim_{t\to\infty}K(t)=0.
\end{equation*}
Now by the definition of improper integrals,
\begin{align*}
\int_0^\infty\close K(t)e^{-it\omega}\d t= \lim_{A\to\infty}\int_0^A\close  K(t)e^{-it\omega}\d t.
\end{align*}
Substituting $K(t)=\int_0^\infty e^{-tx}\mu( \d x    )$ on the above RHS, we have a chain of implications
\begin{align*}
\int_0^A\close K(t)e^{-it\omega}\d t &= \int_0^A \close\int_0^\infty\close e^{-tx}\mu( \d x    ) e^{-it\omega}\d t \\
&= \int_0^\infty \close\int_0^A\close e^{-(x+i\omega)t}\d t \mu( \d x    )\\
&= \int_0^\infty\!\!\frac{1-e^{-(x+i\omega)A}}{x+i\omega}\mu( \d x    )\\
&= \int_0^\infty\!\! \frac{\left(1-e^{-(x+i\omega)A}\right)x}{x^2+\omega^2}\mu( \d x    )-i\int_0^\infty \close\frac{\left(1-e^{-(x+i\omega)A}\right)\omega}{x^2+\omega^2}\mu( \d x    ).
\end{align*}
Considering the first integral term, it is clear that for all $x\geq 0$ and $\omega\neq 0$, 
\begin{align*}
\frac{\left|1-e^{-(x+i\omega)A}\right|x}{x^2+\omega^2}\leq \frac{2x}{x^2+\omega^2}, \qquad A\to\infty,
\end{align*}
and that
\begin{align*}
\frac{\left(1-e^{-(x+i\omega)A}\right)x}{x^2+\omega^2}\to \frac{x}{x^2+\omega^2}, \qquad A\to\infty.
\end{align*}
By the Dominated Convergence Theorem, we obtain
\begin{align*}
\int_0^\infty\!\! \frac{\left(1-e^{-(x+i\omega)A}\right)x}{x^2+\omega^2}\mu( \d x    )\to \int_0^\infty\!\! \frac{x}{x^2+\omega^2}\mu( \d x    ),\qquad A\to\infty.
\end{align*}
With regard to the second term, we note that $\mu(\{0\})=0$ as reasoned above. It follows that for $\mu-$almost every $x\in[0,\infty)$, we have
\begin{align*}
\frac{\left(1-e^{-(x+i\omega)A}\right)\omega}{x^2+\omega^2}\to \frac{\omega}{x^2+\omega^2}.
\end{align*}
Also, for all $x\geq 0$ and $\omega\neq 0$, 
\begin{align*}
\frac{\left|(1-e^{-(x+i\omega)A})\omega\right|}{x^2+\omega^2}\leq \frac{2x}{x^2+\omega^2},
\end{align*}
which implies
\begin{align*}
\int_0^\infty \close\frac{\left(1-e^{-(x+i\omega)A}\right)\omega}{x^2+\omega^2}\mu( \d x    )\to\int_0^\infty \close\frac{\omega}{x^2+\omega^2}\mu( \d x    ),\qquad A\to\infty,
\end{align*}
by the Dominated Convergence Theorem. The proof is thus complete.
\end{proof}
We finish this subsection by the following important observations on the class of $\CM_b$ that will be the main ingredients for our analysis on the regularity of the solutions of~\eqref{eq:spatial-gle} in Section~\ref{sec:main-results}.
\begin{lemma} \label{lem:complete-monotone:fourier-analysis} Suppose that $K\in\CM_b$ and that $K(t)\downarrow 0$ as $t\to\infty$. Define for $\omega\neq 0$,
\begin{align} \label{form:KcosKsin}
\Kcos(\omega):=\int_0^\infty\close K(t)\cos(t\omega)\d t,\quad\text{and}  \quad\Ksin(\omega):=\int_0^\infty\close K(t)\sin(t\omega)\d t.
\end{align} 
Then, the following properties hold.

	(a) $\Kcos(\omega)$ is decreasing, $\omega\Kcos(\omega)$ is bounded and $\omega^2\Kcos(\omega)$ is increasing on $\omega\in [0,\infty)$.
	
	(b) $\omega\Ksin(\omega)$ is increasing on $\omega\in [0,\infty)$ and $\lim_{\omega\to\infty}\omega\Ksin(\omega)=K(0)$.
	
	(c) The ratio $\frac{\Ksin(\omega)}{\omega}$ is decreasing to 0 as $\omega\to\infty$. Consequently, for $k$ sufficiently large, the equation 
	\begin{align} \label{eq:complete-monotone:fourier:ksin}
		1-\alpha_k\frac{\Ksin(\omega)}{\omega}=0,
	\end{align}
has a unique solution $\omega_k$ on $(0,\infty)$ where $\alpha_k$ is as in Assumption~\ref{cond:A}. Moreover, $\lim_{k\to\infty}\frac{\omega_k^2}{\alpha_k}=K(0)$.
	
	(d) There exists a constant $c>0$ such that for any sufficiently large $k$, we have that $\omega_k>1$ and, for any $q \in (0,1)$, we have
	\begin{equation} \label{ineq:ksin}
		\left|\alpha_k\frac{\Ksin(\omega)}{\omega}-1\right|\geq \frac{c}{\omega_k}\left|\omega-\omega_k\right|,
	\end{equation}
	 for all $\omega\in[\omega_k-\omega_k^q,\omega_k+\omega_k^q]$.
\end{lemma}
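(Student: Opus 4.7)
The plan is to work directly with the representations
\begin{align*}
\Kcos(\omega)=\int_0^\infty \frac{x}{x^2+\omega^2}\mu(\d x), \qquad \Ksin(\omega)=\int_0^\infty \frac{\omega}{x^2+\omega^2}\mu(\d x),
\end{align*}
for $\omega>0$, which follow from Lemma~\ref{lem:complete-monotone:fourier} by splitting into real and imaginary parts. Here $\mu$ is the finite representation measure of $K\in\CMb$, with $\mu(\{0\})=0$ and $\mu([0,\infty))=K(0)$. All four claims then reduce to monotone/dominated convergence arguments applied to these integrals.

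For (a), the integrand $x/(x^2+\omega^2)$ is manifestly decreasing in $\omega\ge 0$ for each fixed $x\ge 0$, so $\Kcos$ is decreasing; the AM--GM bound $2\omega x\le x^2+\omega^2$ gives $\omega\Kcos(\omega)\le K(0)/2$; and writing $\omega^2\Kcos(\omega)=\int x\,[1-x^2/(x^2+\omega^2)]\mu(\d x)$ makes monotonicity in $\omega$ transparent. For (b), $\omega\Ksin(\omega)=\int \omega^2/(x^2+\omega^2)\mu(\d x)$ is plainly increasing, and the monotone convergence theorem (together with $\mu(\{0\})=0$) delivers the limit $K(0)$ as $\omega\to\infty$.

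For (c), the representation $\Ksin(\omega)/\omega=\int 1/(x^2+\omega^2)\mu(\d x)$ is strictly decreasing with limit $0$ by dominated convergence. Since $1/\alpha_k\to 0$, the intermediate value theorem produces, for all sufficiently large $k$, a unique $\omega_k>0$ with $\alpha_k\Ksin(\omega_k)/\omega_k=1$, and necessarily $\omega_k\to\infty$. Rewriting the defining relation as $\omega_k\Ksin(\omega_k)=\omega_k^2/\alpha_k$ and applying part (b) gives $\omega_k^2/\alpha_k\to K(0)$.

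For (d), since $\omega_k^2\sim K(0)\alpha_k\to\infty$, we have $\omega_k>1$ for $k$ large. For the Lipschitz-type estimate, set $g_k(\omega):=\alpha_k\Ksin(\omega)/\omega-1$; this function is smooth and strictly decreasing with $g_k(\omega_k)=0$, so by the mean value theorem the claim reduces to establishing a uniform lower bound $|g_k'(\xi)|\ge c/\omega_k$ over the window. Differentiating under the integral,
\begin{align*}
|g_k'(\xi)|=2\alpha_k\xi\int_0^\infty \frac{\mu(\d x)}{(x^2+\xi^2)^2}\ge 2\alpha_k\xi\int_{[0,\xi]}\frac{\mu(\d x)}{(2\xi^2)^2}=\frac{\alpha_k}{2\xi^3}\mu([0,\xi]).
\end{align*}
For $k$ large (relative to $q$), the window $[\omega_k-\omega_k^q,\omega_k+\omega_k^q]$ is contained in $[\omega_k/2,2\omega_k]$, so $\xi\le 2\omega_k$ and $\mu([0,\xi])\ge\mu([0,\omega_k/2])\ge K(0)/2$. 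Combined with $\alpha_k/\omega_k^2\to 1/K(0)$ from (c), this yields $|g_k'(\xi)|\ge c/\omega_k$ and finishes the proof. The main obstacle is precisely this last bound: naive estimates on $\int(x^2+\xi^2)^{-2}\mu(\d x)$ tend to produce the wrong scale in $\omega_k$, so both the split at $x=\xi$ (to exploit the mass $\mu([0,\xi])\to K(0)$) and the asymptotic $\alpha_k\asymp\omega_k^2/K(0)$ from (c) must be used in tandem.
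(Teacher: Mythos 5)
Your parts (a), (b), (c) follow the paper's reasoning essentially verbatim: the integral representations $\Kcos(\omega)=\int_0^\infty x(x^2+\omega^2)^{-1}\mu(\d x)$ and $\Ksin(\omega)=\int_0^\infty\omega(x^2+\omega^2)^{-1}\mu(\d x)$ plus monotone/dominated convergence, and the identity $\omega_k^2/\alpha_k=\omega_k\Ksin(\omega_k)\to K(0)$. Part (d) is where you take a genuinely different route. The paper avoids derivatives altogether: using $\omega_k=\alpha_k\Ksin(\omega_k)$ it factors $\alpha_k\Ksin(\omega)/\omega-1=\alpha_k\int_0^\infty\frac{(\omega_k-\omega)(\omega_k+\omega)}{(x^2+\omega^2)(x^2+\omega_k^2)}\mu(\d x)$, pulls out the factor $|\omega-\omega_k|$, and then bounds the remaining integral from below via the elementary inequality $x^2+\omega^2\le4(x^2+\omega_k^2)$ on the window together with Cauchy--Schwarz, arriving at the explicit constant $c=\Ksin(1)/(4K(0))$. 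You instead apply the mean value theorem to $g_k(\omega)=\alpha_k\Ksin(\omega)/\omega-1$, differentiate under the integral, and lower-bound $|g_k'(\xi)|$ by restricting the measure to $[0,\xi]$ and invoking tightness of $\mu$. Both routes are correct and exploit the same structural facts ($\mu$ finite with total mass $K(0)$, $\omega_k\to\infty$, $\omega_k^2\asymp K(0)\alpha_k$); yours is perhaps more intuitive (it literally estimates the slope of the root-crossing), while the paper's is a touch more algebraic and produces the constant without any appeal to measure concentration.

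One subtlety worth flagging: your argument requires the inclusion $[\omega_k-\omega_k^q,\omega_k+\omega_k^q]\subset[\omega_k/2,2\omega_k]$, which needs $\omega_k^q\le\omega_k/2$, i.e.\ $\omega_k\ge 2^{1/(1-q)}$. This makes the threshold ``$k$ sufficiently large'' depend on $q$ (blowing up as $q\to 1^-$), whereas the lemma as stated places ``for any $q\in(0,1)$'' \emph{inside} the quantifier over $k$, and the paper's proof actually delivers that stronger uniform version — it needs only $\omega_k>1$ plus $\omega_k\Ksin(\omega_k)\ge\Ksin(1)$, neither of which involves $q$. In the downstream application (Proposition~\ref{prop:spatial-gle:mode:Holder}) the parameter $q$ is fixed before the threshold $k^*$ is chosen, so your weaker version suffices in practice; but you should be aware that your proof establishes a formally weaker statement, and if you wanted to match the paper's uniformity you could, for instance, replace your split at $x=\xi$ with Cauchy--Schwarz on $\int(x^2+\xi^2)^{-2}\mu(\d x)$ as the paper does, which removes the need for $\xi$ to be bounded away from $0$.
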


\begin{remark} \label{rem1} We will see later in the proof of Lemma~\ref{lem:complete-monotone:fourier-analysis} (d) that the assumption~$K(0)<\infty$ is crucial for our analysis, which explains why we are restricted to memory kernels that belong to $\CM_b$, not those belonging to $\CM\setminus\CM_b$, e.g., $|t|^{-\alpha}$, $\alpha>0$.
\end{remark}
\begin{proof}[Proof of Lemma~\ref{lem:complete-monotone:fourier-analysis}] (a) In view of Lemma~\ref{lem:complete-monotone:fourier}, the first assertion is evident since
\begin{align*}
\Kcos(\omega)=\int_0^\infty\frac{x}{x^2+\omega^2}\mu( \d x    ).
\end{align*}
To see the second claim, we employ Young's inequality to find that
\begin{align*}
\omega\Kcos(\omega)=\int_0^\infty\close\frac{x\omega}{x^2+\omega^2}\mu(\d x  )\leq \frac{1}{2}\int_0^\infty\close\mu(\d x  )<\infty.
\end{align*}
Also,
\begin{align*}
\omega^2\Kcos(\omega)=\int_0^\infty\close\frac{x\omega^2}{x^2+\omega^2}\mu(\d x  ),
\end{align*}
which is clearly increasing on $\omega\in(0,\infty)$.

(b) We note that
\begin{align*}
\omega\Ksin(\omega) = \int_0^\infty\close\frac{\omega^2}{x^2+\omega^2}\mu( \d x    ).
\end{align*}
The Monotone Convergence Theorem then implies that
\begin{align*}
\lim_{\omega\to\infty}\omega\Ksin(\omega)=\int_0^\infty\close \mu( \d x  )=K(0).
\end{align*}

(c) The first assertion is evident since $\Ksin(\omega)/\omega$ admits the formula
\begin{align*}
\frac{\Ksin(\omega)}{\omega}=\int_0^\infty \frac{1}{x^2+\omega^2}\mu( \d x  ).
\end{align*}
By the Monotone Convergence Theorem, we see that $\Ksin(\omega)/\omega\downarrow 0$ as $\omega\to\infty$. Now to verify that $1-\alpha_k\Ksin(\omega)/\omega=0$ must have a unique solution $\omega_k\in(0,\infty)$ for $k$ sufficiently large, we have the following observations
$$\lim_{\omega\to 0^+}1-\alpha_k\frac{\Ksin(\omega)}{\omega}=1-\alpha_k\int_0^\infty\frac{1}{x^2}\mu(\d x),\quad\text{and}\quad \lim_{\omega\to\infty}1-\alpha_k\frac{\Ksin(\omega)}{\omega}=1,$$
where the left--hand side limit may be positive. If $\int_0^\infty\frac{1}{x^2}\mu(\d x)$ diverges, then it is clear that
$$\lim_{\omega\to 0^+}1-\alpha_k\frac{\Ksin(\omega)}{\omega}=1-\alpha_k\int_0^\infty\frac{1}{x^2}\mu(\d x)=-\infty.$$
Otherwise, since $\{\alpha_k\}_{k\ge 1}\uparrow\infty$ by Assumption~\ref{cond:A}, there must exist an index $k^*$ sufficiently large such that for all $k\ge k^*$, the above left--hand side limit must be negative. Together with monotonic property, we can infer the existence and uniqueness of $\omega_k\in(0,\infty)$ solving $1-\alpha_k\frac{\Ksin(\omega)}{\omega}=0$. Finally, we note that
\begin{align*}
\frac{\omega_k^2}{\alpha_k}=\omega_k\Ksin(\omega_k)\to K(0), \qquad k\to\infty,
\end{align*}
by virtue of part (b). 

(d) Since $\{\alpha_k\}_{k\ge 1}\uparrow\infty$ and $\omega_k^2/\alpha_k\to K(0)$ from part (c), it is clear that for $k$ large enough, $\omega_k>1$. To prove~\eqref{ineq:ksin}, we assume that $\omega\neq\omega_k$ (it is trivial when $\omega=\omega_k$). Using the identity $\omega_k=\alpha_k\Ksin(\omega_k)$, we recast the left--hand side of~\eqref{ineq:ksin} as
\begin{align*}
\alpha_k\frac{\Ksin(\omega)}{\omega}-1&= \alpha_k\frac{\Ksin(\omega)}{\omega}-\alpha_k\frac{\Ksin(\omega_k)}{\omega_k}\\
&=\alpha_k\int_0^\infty\close\frac{1}{x^2+\omega^2}-\frac{1}{x^2+\omega_k^2}\mu( \d x    )\\
&=\alpha_k\int_0^\infty\close\frac{(\omega_k-\omega)(\omega_k+\omega)}{(x^2+\omega^2)(x^2+\omega_k^2)}\mu( \d x    ).
\end{align*}
It follows that~\eqref{ineq:ksin} is equivalent to
\begin{align*}
\alpha_k\int_0^\infty\close\frac{|\omega_k-\omega|(\omega_k+\omega)}{(x^2+\omega^2)(x^2+\omega_k^2)}\mu( \d x    )\geq \frac{c}{\omega_k}|\omega_k-\omega|,
\end{align*}
which is the same as
\begin{align*}
\int_0^\infty\close\frac{\omega_k(\omega_k+\omega)}{(x^2+\omega^2)(x^2+\omega_k^2)}\mu( \d x    )\geq \frac{c}{\alpha_k}=\frac{c\Ksin(\omega_k)}{\omega_k}=c\int_0^\infty\close\frac{1}{x^2+\omega_k^2}\mu( \d x    ).
\end{align*}
It therefore suffices to show that
\begin{align*}
\int_0^\infty\close\frac{\omega_k(\omega_k+\omega)}{(x^2+\omega^2)(x^2+\omega_k^2)}\mu(\d x  )\geq c\int_0^\infty\close\frac{1}{x^2+\omega_k^2}\mu(\d x  ).
\end{align*}
For $\omega\in[\omega_k-\omega_k^q,\omega_k+\omega_k^q]$, it is clear that $x^2+\omega^2\leq 4(x^2+\omega_k^2)$. We then find a lower estimate as follows:
\begin{align*}
\int_0^\infty\frac{\omega_k(\omega_k+\omega)}{(x^2+\omega^2)(x^2+\omega_k^2)}\mu(\d x  )&\geq\frac{1}{4}\int_0^\infty\close\frac{\omega_k^2}{(x^2+\omega_k^2)^2}\mu( \d x    )\\
&=\frac{\omega_k^2}{4K(0)}\int_0^\infty\close\mu( \d x    )\int_0^\infty\close\frac{1}{(x^2+\omega_k^2)^2}\mu( \d x ).
\end{align*}
We invoke H\"older's inequality (assuming $k$ is large enough) to find
\begin{equation*}
\begin{aligned}
\frac{\omega_k^2}{4K(0)}\int_0^\infty\close\mu( \d x    )\int_0^\infty\close\frac{1}{(x^2+\omega_k^2)^2}\mu( \d x  )&\geq\frac{\omega_k^2}{4K(0)}\Big(\int_0^\infty\close\frac{1}{x^2+\omega_k^2}\mu( \d x  )\Big)^2\\
&=\frac{\omega_k\Ksin(\omega_k)}{4K(0)}\int_0^\infty\close \frac{1}{x^2+\omega_k^2}\mu(\d x)\\
&\geq \frac{\Ksin(1)}{4K(0)}\int_0^\infty\close\frac{1}{x^2+\omega_k^2}\mu(\d x).
\end{aligned}
\end{equation*}
where in the last implication, we have employed the fact that $\omega\Ksin(\omega)$ is increasing from part (b). Finally, setting $c=\Ksin(1)/4K(0)$ and collecting everything, we obtain~\eqref{ineq:ksin}, which completes the proof.
\end{proof}

\section{Proof of main results} \label{sec:main-results}

\subsection{Proof of Theorem~\ref{thm:spatial-gle:well-posed}}
We begin this section by recalling some results on the well--posedness of the GLE \eqref{eq:spatial-gle:mode}. The proof of these results can be found in~\cite{mckinley2018anomalous}.

\begin{proposition} \label{prop:spatial-gle:mode:weak-solution} Suppose that the memory kernel $K$ satisfies Assumption~\ref{cond:K}. For each $k$, 
$V_k$ is a weak solution for \eqref{eq:spatial-gle:mode} in the sense of Definition~\ref{def:GLE:weak-solution} if and only if the spectral measure $\nu_k$ satisfies $\nu_k(\d\omega)=\rho_k(\omega)\d\omega$ where $\rho_k$ is given by
\begin{equation} \label{eq:spectral-density}
\rho_k(\omega) :=\frac{\lambda_k^2\widehat{K}(\omega)}{2\pi\left| i\omega+\alpha_k\widehat{K^+}(\omega)\right|^2}.
\end{equation}
\end{proposition}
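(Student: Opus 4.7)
\medskip

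\noindent\textbf{Proof plan for Proposition~\ref{prop:spatial-gle:mode:weak-solution}.} The plan is to translate the weak formulation in Definition~\ref{def:GLE:weak-solution} into a Fourier-side identity between measures and then read off $\rho_k$. Throughout, I write $\psi_\varphi := -\varphi' + \alpha_k \widetilde{K^+ * \widetilde{\varphi}}$ for the test functional from Definition~\ref{def:GLE:weak-solution}(b), and $V_k$ is the generalized random distribution associated to $\nu_k$ via~\eqref{eq:generalized-random-distribution}.

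The first step is to compute $\widehat{\psi_\varphi}$. Using the paper's convention $\widehat{f}(\omega)=\int f(t)e^{-it\omega}\d t$, one has $\widehat{-\varphi'}(\omega)=-i\omega\widehat{\varphi}(\omega)$, and a direct change of variables gives
\begin{align*}
\widetilde{K^+*\widetilde{\varphi}}(t) \;=\; \int_0^\infty K(s)\,\varphi(s+t)\,\d s,
\end{align*}
whose Fourier transform equals $\overline{\widehat{K^+}(\omega)}\,\widehat{\varphi}(\omega)$. Combining these yields
\begin{align*}
\widehat{\psi_\varphi}(\omega) \;=\; \bigl(-i\omega + \alpha_k\overline{\widehat{K^+}(\omega)}\bigr)\widehat{\varphi}(\omega),
\qquad
\bigl|\widehat{\psi_\varphi}(\omega)\bigr|^2 \;=\; \bigl|i\omega+\alpha_k\widehat{K^+}(\omega)\bigr|^2 |\widehat{\varphi}(\omega)|^2.
\end{align*}
Before using this, I need to check that $K^+*\varphi\in\dom(V_k)$, i.e. that $\widehat{K^+*\varphi}=\widehat{K^+}\,\widehat{\varphi}\in L^2(\nu_k)$; this follows once $\rho_k$ is identified, because $|\widehat{K^+}|$ is bounded on any region $|\omega|\ge\delta>0$ by Lemma~\ref{lem:complete-monotone:fourier}, and Schwartz decay of $\widehat\varphi$ kills the singularity at $\omega=0$ against the density $\rho_k$.

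Second, I identify the spectral measure of $F_k$. Because $F_k$ has covariance $K(|t-s|)$ and $K\in\CM_b$ is symmetric and decreasing to $0$, Lemma~\ref{lem:complete-monotone:fourier} gives the real, nonnegative spectral density $\widehat{K}(\omega)=2\Re\widehat{K^+}(\omega)=2\int_0^\infty \frac{x}{x^2+\omega^2}\mu(\d x)$, so the measure in the representation~\eqref{eq:stationary-random-distibution} for $F_k$ is $\nu_{F_k}(\d\omega)=\tfrac{1}{2\pi}\widehat{K}(\omega)\,\d\omega$.

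Third, I substitute both sides of the identity in Definition~\ref{def:GLE:weak-solution}(b) into~\eqref{eq:generalized-random-distribution}: for all $\varphi,\psi\in\Sc$,
\begin{align*}
\int_\rbb |\widehat{\varphi}(\omega)|^2\bigl|i\omega+\alpha_k\widehat{K^+}(\omega)\bigr|^2 \,\nu_k(\d\omega)
\;=\; \frac{\lambda_k^2}{2\pi}\int_\rbb |\widehat{\varphi}(\omega)|^2\,\widehat{K}(\omega)\,\d\omega,
\end{align*}
(with the analogous polarized identity for $\varphi\neq\psi$). Since $\{\widehat{\varphi}:\varphi\in\Sc\}=\Sc$ and the collection $\{|\widehat{\varphi}|^2\}$ separates Borel measures on $\rbb$, this forces the signed-measure identity
\begin{align*}
\bigl|i\omega+\alpha_k\widehat{K^+}(\omega)\bigr|^2\,\nu_k(\d\omega) \;=\; \tfrac{\lambda_k^2}{2\pi}\widehat{K}(\omega)\,\d\omega.
\end{align*}
Since $|i\omega+\alpha_k\widehat{K^+}(\omega)|^2>0$ for $\omega\neq 0$ (the imaginary part $\omega-\alpha_k\Ksin(\omega)$ cannot vanish simultaneously with $\omega$), we may divide and conclude that $\nu_k$ is absolutely continuous on $\rbb\setminus\{0\}$ with density exactly $\rho_k$ as in~\eqref{eq:spectral-density}. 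The forward implication follows by reversing the chain: given $\rho_k$ from~\eqref{eq:spectral-density}, the displayed identities hold and \eqref{ineq:spectral-measure} must be verified, which reduces to checking that $\rho_k(\omega)(1+\omega^2)^{-k}$ is integrable; near $\omega=0$ one uses $\widehat K(\omega)\sim$ const and $|\alpha_k\widehat{K^+}(\omega)|\to\infty$ or stays bounded but nonzero, while at infinity Lemma~\ref{lem:complete-monotone:fourier-analysis}(b) gives $\omega\,\Ksin(\omega)\to K(0)$, making $|i\omega+\alpha_k\widehat{K^+}|^2\asymp \omega^2$.

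The main obstacle I anticipate is the bookkeeping around $\omega=0$: the distributional Fourier identities are cleanest away from zero, and I must confirm that the spectral density $\rho_k$ is integrable enough near the origin so that $\nu_k$ satisfies~\eqref{ineq:spectral-measure} and that $\delta_t$-type test functions remain admissible, in line with the convention of Definition~\ref{def:velocity}; once the $\omega=0$ behavior is settled using Lemma~\ref{lem:complete-monotone:fourier-analysis}(a)--(b), the rest is Fourier bookkeeping.
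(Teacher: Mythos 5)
Your plan is essentially the standard argument: the paper itself gives no proof of this proposition, deferring entirely to \cite[Theorem~4.3]{mckinley2018anomalous}, and what you write is precisely the Fourier-side computation that the surrounding framework (Section~\ref{sec:GLE:weak-solution} together with Lemma~\ref{lem:complete-monotone:fourier}) sets up — compute $\widehat{\psi_\varphi}=(-i\omega+\alpha_k\overline{\widehat{K^+}(\omega)})\widehat\varphi(\omega)$, identify the spectral measure of $F_k$ as $\tfrac{1}{2\pi}\widehat K(\omega)\,\d\omega$, substitute into \eqref{eq:generalized-random-distribution} and the covariance identity in Definition~\ref{def:GLE:weak-solution}(b), and separate measures using polarization. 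One small slip worth flagging: in justifying that $\bigl|i\omega+\alpha_k\widehat{K^+}(\omega)\bigr|^2>0$ for $\omega\neq 0$, you appeal to the imaginary part $\omega-\alpha_k\Ksin(\omega)$, but that quantity does vanish at $\omega=\omega_k$ (this is exactly Equation~\eqref{eq:complete-monotone:fourier:ksin}); the correct and cleaner reason is that the real part $\alpha_k\Kcos(\omega)=\alpha_k\int_0^\infty \tfrac{x}{x^2+\omega^2}\mu(\d x)$ is strictly positive for every $\omega\neq 0$ since $\mu((0,\infty))>0$. With that fix the division step is airtight, and the remaining $\omega=0$ bookkeeping you flag is genuinely the only delicate point.
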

\begin{proof}
See \cite[Theorem 4.3]{mckinley2018anomalous}.
\end{proof}

The function $\rho_k$ is called the \emph{spectral density} of $V_k$. Having obtained weak solutions of~\eqref{eq:spatial-gle:mode}, in the following proposition we assert that $u_k(t)$ is a stationary Gaussian process. 

\begin{proposition} \label{prop:spatial-gle:mode:process} Under the hypotheses of Proposition~\ref{prop:spatial-gle:mode:weak-solution}, let $V_k$ be the weak solution of~\eqref{eq:spatial-gle:mode} and $\rho_k$ be the corresponding spectral density as in~\eqref{eq:spectral-density}. Then, 
\begin{enumerate}[(a)]
\item $\rho_k\in L^1(\rbb)$;

\item $\delta_t $ belongs to $\dom(V_k)$ and the process $u_k(t):=\la V_k,\delta_t\ra$ is a real valued Gaussian stationary process with zero mean in the sense of Definition~\ref{def:stationary-process}, which is a.s.~continuous; and

\item the autocovariance of $u_k(t)$ admits the representation
\begin{align} \label{eq:spatial-gle:mode:covariance}
\E[u_k(t)u_k(s)] = \int_\rbb e^{i(t-s)\omega}\rho_k(\omega)\d\omega.
\end{align}
\end{enumerate}
\end{proposition}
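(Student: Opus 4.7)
The plan is to treat the three parts in sequence, with Lemmas~\ref{lem:complete-monotone:fourier} and~\ref{lem:complete-monotone:fourier-analysis} as the main technical inputs. First rewrite the spectral density: since $K$ is symmetric, $\widehat{K}(\omega)=2\Kcos(\omega)$, and since $\widehat{K^+}(\omega)=\Kcos(\omega)-i\Ksin(\omega)$,
$$\rho_k(\omega)=\frac{\lambda_k^2\,\Kcos(\omega)}{\pi\bigl[\alpha_k^2\Kcos(\omega)^2+(\omega-\alpha_k\Ksin(\omega))^2\bigr]}.$$
In particular $\rho_k$ is even in $\omega$ (the denominator is invariant under $\omega\mapsto-\omega$ by complex conjugation of $\widehat{K^+}$), a fact I will use to justify that $u_k$ admits a real-valued version.

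For part (a), I would estimate the integrand near $\omega=0$ and near $|\omega|=\infty$ separately. Since the denominator is bounded below by $\alpha_k^2\Kcos(\omega)^2$, we get $\rho_k(\omega)\le \lambda_k^2/(\pi\alpha_k^2\Kcos(\omega))$; combined with the monotonicity of $\Kcos$ from Lemma~\ref{lem:complete-monotone:fourier-analysis}(a), this gives $\rho_k(\omega)\le \lambda_k^2/(\pi\alpha_k^2\Kcos(1))$ for $|\omega|\le 1$, so $\rho_k$ is locally integrable at the origin. For large $|\omega|$, Lemma~\ref{lem:complete-monotone:fourier-analysis}(a) gives $\omega\Kcos(\omega)$ bounded and Lemma~\ref{lem:complete-monotone:fourier-analysis}(c) gives $\Ksin(\omega)/\omega\to 0$, so the numerator is $O(|\omega|^{-1})$ and the denominator behaves like $\omega^2$; hence $\rho_k(\omega)=O(|\omega|^{-3})$, which is integrable. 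Together these establish $\rho_k\in L^1(\rbb)$.

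Parts (b) and (c) then fall out of the framework of generalized random distributions. Since $\mathcal{F}[\delta_t](\omega)=e^{-it\omega}$ has modulus one, part (a) gives $\int_{\rbb}|\mathcal{F}[\delta_t]|^2\rho_k\,\d\omega=\int_{\rbb}\rho_k\,\d\omega<\infty$, so $\delta_t\in\dom(V_k)$ and $u_k(t)=\la V_k,\delta_t\ra$ is well defined. Joint Gaussianity of $(u_k(t_1),\dots,u_k(t_n))$ follows because each linear combination $\sum_j c_ju_k(t_j)=\la V_k,\sum_j c_j\delta_{t_j}\ra$ is a Gaussian variable inherited from the Gaussian driving noise via the spectral construction, and zero mean is immediate. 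Taking $f_1=\delta_t$, $f_2=\delta_s$ in~\eqref{eq:generalized-random-distribution},
$$\Enone[u_k(t)\overline{u_k(s)}]=\int_{\rbb}\overline{e^{-it\omega}}e^{-is\omega}\rho_k(\omega)\,\d\omega=\int_{\rbb}e^{-i(s-t)\omega}\rho_k(\omega)\,\d\omega=\int_{\rbb}e^{i(t-s)\omega}\rho_k(\omega)\,\d\omega,$$
where the last equality uses evenness of $\rho_k$. This is~\eqref{eq:spatial-gle:mode:covariance}, and since the right side is real and depends only on $t-s$, one obtains a real-valued stationary modification.

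The main obstacle is almost-sure sample-path continuity. From (c),
$$\Enone|u_k(t+h)-u_k(t)|^2=2\int_{\rbb}(1-\cos(h\omega))\rho_k(\omega)\,\d\omega.$$
I would split the integral at $|\omega|=1/|h|$, bounding $1-\cos(h\omega)\le h^2\omega^2/2$ on the inner region and $1-\cos\le 2$ on the outer, and combine with the $|\omega|^{-3}$-decay of $\rho_k$ from part (a) to obtain $\Enone|u_k(t+h)-u_k(t)|^2=O(h^2\log(1/|h|))$. Since $u_k$ is Gaussian, all higher even central moments are controlled by powers of this variance, so for $p$ large enough Kolmogorov's continuity criterion yields a continuous modification. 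This final step -- carefully tracking the decay of $\rho_k$ in order to feed into Kolmogorov -- is where most of the technical work lies.
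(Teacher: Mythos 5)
The paper does not prove this proposition locally; it simply cites~\cite{mckinley2018anomalous}. Your argument reconstructs a self-contained proof from the spectral framework of Section~\ref{sec:prelim} and Lemmas~\ref{lem:complete-monotone:fourier}--\ref{lem:complete-monotone:fourier-analysis}, and it is sound. The $L^1$ estimate for $\rho_k$ via the lower bound $\Kcos(\omega)\ge\Kcos(1)$ near the origin and $\rho_k(\omega)=O(\omega^{-3})$ at infinity (using $\omega\Kcos(\omega)$ bounded and $\Ksin(\omega)/\omega\to 0$) is exactly the kind of reasoning the paper itself uses later in Corollary~\ref{cor:Holder_space}. Membership of $\delta_t$ in $\dom(V_k)$ then follows from $|\mathcal F[\delta_t]|\equiv 1$, and the covariance identity~\eqref{eq:spatial-gle:mode:covariance} drops out of~\eqref{eq:generalized-random-distribution}. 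Your continuity argument (split at $|\omega|=1/|h|$, get $\E|u_k(t+h)-u_k(t)|^2=O(h^2\log(1/|h|))$ for fixed $k$, feed into Kolmogorov) is correct and in fact stronger than what the paper needs from Proposition~\ref{prop:spatial-gle:mode:Holder}; the reason the paper's proof of that later proposition is more elaborate (decomposition around $\omega_k$ into $I_1,\dots,I_5$) is that it must track the $\alpha_k$-dependence of the constants to sum over $k$, which your per-mode estimate does not do and does not need to do here.

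Two small remarks. First, the line ``where the last equality uses evenness of $\rho_k$'' is misplaced: $e^{-i(s-t)\omega}=e^{i(t-s)\omega}$ identically, with no appeal to evenness. Evenness of $\rho_k$ is what makes $\int_\rbb e^{i(t-s)\omega}\rho_k(\omega)\,\d\omega$ real, which is the ingredient you actually want for the claim that $u_k$ has a real-valued modification; you invoke it one step too early. Second, when you say the numerator is $O(|\omega|^{-1})$ and the denominator $\sim\omega^2$, that asymptotic only holds once $\alpha_k\Ksin(\omega)/\omega$ has dropped below $1$ (i.e., past $\omega_k$); for intermediate $\omega$ you should fall back on the bound $\rho_k(\omega)\le \lambda_k^2/(\pi\alpha_k^2\Kcos(\omega))$, which is finite on any compact set away from the origin since $\Kcos>0$. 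For a fixed $k$ this gives local boundedness and then the tail bound takes over, so the conclusion stands, but it is worth making the two regimes explicit.
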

\begin{proof}
See \cite[Theorem 5.4]{mckinley2018anomalous}.
\end{proof}

With regard to the existence of stationary solutions for~\eqref{eq:spatial-gle}, we remark that Proposition~\ref{prop:spatial-gle:mode:process} only guarantees the second condition of Definition~\ref{def:spatial-gle:solution}. In order to fulfill the first requirement, using the decomposition $u(t,\xbf)=\sum_{k\ge 1}u_k(t)e_k(\xbf)$ and the fact that the processes $u_k(t)$ are all independent, we observe that
$$\E\|u(t,\cdot)\|^2_H=\sum_{k\ge 1}\E|u_k(t)|^2.$$
It is thus necessary to obtain useful bounds on the second moment of $u_k$, which by virtue of Proposition~\ref{prop:spatial-gle:mode:process} (c), cf.~\eqref{eq:spatial-gle:mode:covariance}, is equivalent to calculating $\int_\rbb \rho_k(\omega)\d\omega$. To be more precise, we have the following lemma, which will be employed to prove Theorem~\ref{thm:spatial-gle:well-posed}.

\begin{lemma} \label{lem:int.rho_k=lamda/alpha}  Suppose that the memory kernel $K$ satisfies Assumption~\ref{cond:K}. Let $\rho_k$ be the spectral density as in~\eqref{eq:spectral-density}. Then,
\begin{equation} \label{eq:int.rho_k=lambda/alpha}
\int_\rbb \rho_k(\omega)\d\omega=\frac{\lambda_k^2}{\alpha_k}.
\end{equation}
\end{lemma}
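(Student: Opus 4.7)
\smallskip

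\noindent\textbf{Proof plan.} The plan is to convert the stated integral identity into a contour-integration problem, using Lemma~\ref{lem:complete-monotone:fourier} to expose the ``resolvent'' structure of the spectral density. Write $g(\omega):=i\omega+\alpha_k\widehat{K^+}(\omega)$. Because $K$ is symmetric, $\widehat{K}(\omega)=2\Kcos(\omega)$, and Lemma~\ref{lem:complete-monotone:fourier} gives $\widehat{K^+}(\omega)=\Kcos(\omega)-i\Ksin(\omega)$, so $\Re g(\omega)=\alpha_k\Kcos(\omega)=\tfrac{\alpha_k}{2}\widehat{K}(\omega)$. Plugging this into~\eqref{eq:spectral-density} gives
\begin{equation*}
\rho_k(\omega)=\frac{\lambda_k^2\,\widehat{K}(\omega)}{2\pi|g(\omega)|^2}=\frac{\lambda_k^2}{\pi\alpha_k}\cdot\frac{\Re g(\omega)}{|g(\omega)|^2}=\frac{\lambda_k^2}{\pi\alpha_k}\,\Re\!\left(\frac{1}{g(\omega)}\right).
\end{equation*}
Since $g(-\omega)=\overline{g(\omega)}$, the integrand $1/g(\omega)$ has odd imaginary part, so~\eqref{eq:int.rho_k=lambda/alpha} is equivalent to the single identity
\begin{equation*}
\int_{\rbb}\frac{\d\omega}{g(\omega)}=\pi.
\end{equation*}

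\smallskip

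\noindent The key step is to evaluate this integral by residue calculus, closing the contour in the lower half plane. I would first use the Bernstein representation $K(t)=\int_0^\infty e^{-xt}\mu(\d x)$ together with the computation in Lemma~\ref{lem:complete-monotone:fourier} to extend $\widehat{K^+}$ analytically to the open lower half plane $\{\Im\omega<0\}$ via
\begin{equation*}
\widehat{K^+}(\omega)=\int_0^\infty\frac{\mu(\d x)}{x+i\omega},
\end{equation*}
so $g$ is holomorphic there. Next I would rule out zeros of $g$ in the closed lower half plane: for $\omega=a-bi$ with $b\ge 0$ a direct calculation gives
\begin{equation*}
\Re g(\omega)=b+\alpha_k\int_0^\infty\frac{x+b}{(x+b)^2+a^2}\,\mu(\d x)>0,
\end{equation*}
and on the real line away from the origin this reduces to $\alpha_k\Kcos(\omega)>0$ by Lemma~\ref{lem:complete-monotone:fourier-analysis}(a); at $\omega=0$, $|g(0)|$ is either finite and positive or $+\infty$, in which case $1/g(0)=0$ and the integrand remains continuous.

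\smallskip

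\noindent Finally I would apply Cauchy's theorem to the closed contour consisting of the real segment $[-R,R]$ and the lower semicircle $\Gamma_R=\{Re^{i\theta}\suchthat\theta\in[0,-\pi]\}$. Dominated convergence applied to the Laplace representation shows $\widehat{K^+}(\omega)\to 0$ as $|\omega|\to\infty$ uniformly in $\arg\omega\in[-\pi,0]$, so $g(\omega)\sim i\omega$ on $\Gamma_R$; parametrizing $\omega=Re^{i\theta}$ gives
\begin{equation*}
\lim_{R\to\infty}\int_{\Gamma_R}\frac{\d\omega}{g(\omega)}=\int_0^{-\pi}\d\theta=-\pi.
\end{equation*}
Since $g$ has no poles inside the contour, the closed integral vanishes, yielding $\int_\rbb\d\omega/g(\omega)=\pi$ and hence~\eqref{eq:int.rho_k=lambda/alpha} after multiplying by $\lambda_k^2/(\pi\alpha_k)$.

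\smallskip

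\noindent The main obstacle I anticipate is the technical verification on the large semicircle, specifically making the uniform-in-$\theta$ decay of $\widehat{K^+}(Re^{i\theta})$ rigorous when the representing measure $\mu$ is only known to be finite; one must carefully split the integral over $\mu$ into a bounded-$x$ piece (dominated by $1/R$) and a large-$x$ piece (controlled by $\mu(\{x>N\})$), then let $N$ grow slowly with $R$. A secondary point requiring care is the behavior of $1/g(\omega)$ near $\omega=0$ in the non-integrable case $\int_0^\infty K(t)\,\d t=\infty$, but this is handled by noting that $|g(\omega)|\to\infty$ there so the integrand extends continuously by zero.
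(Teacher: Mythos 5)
Your proposal follows the same basic route as the paper: identify $\rho_k(\omega)$ (up to a constant) with $\Re(1/g(\omega))$ where $g(\omega)=i\omega+\alpha_k\widehat{K^+}(\omega)$, extend $\widehat{K^+}$ analytically to the lower half plane via the Bernstein representation, check that $g$ has no zeros there, and evaluate $\int_\rbb \d\omega/g(\omega)$ by closing a contour in the lower half plane. The symmetry reduction, the zero-free verification via $\Re\,g>0$, and the computation that the outer semicircle contributes $-\pi$ all mirror the paper's Lemma~\ref{lem:f_k} and the proof of Lemma~\ref{lem:int.rho_k=lamda/alpha}.

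The one place where your argument, as written, would fail is the origin. When $\int_0^\infty K(t)\,\d t=\infty$ (which Assumption~\ref{cond:K} permits, e.g.\ $\mu(\d x)=1_{(0,1)}(x)\d x$), $g(\omega)$ blows up as $\omega\to 0$ in the closed lower half plane, so $g$ is not holomorphic at $\omega=0$, and your contour $[-R,R]\cup\Gamma_R$ passes through that point. The sentence ``since $g$ has no poles inside the contour, the closed integral vanishes'' then does not apply: the obstruction is a singularity \emph{on} the contour, not inside it. You correctly observe that $1/g$ extends continuously to $0$ by setting $1/g(0)=0$, but continuity alone is not standard Cauchy's theorem. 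The paper resolves exactly this by excising the origin: it uses the contour $[-R,-1/R]\cup C_{1/R}\cup[1/R,R]\cup C_R$, proves a uniform lower bound on $|g(z)^{-1}|^{-1}$ for small $|z|$ in $\cbb^-\setminus\{0\}$, and shows that the inner-arc contribution $I_2(R)$ vanishes as $R\to\infty$. Your continuity remark is precisely the heuristic behind that estimate, but it needs to be turned into a contour deformation (or into an explicit appeal to a strengthened Cauchy theorem for functions continuous up to the boundary) to be a proof.
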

We note that Lemma~\ref{lem:int.rho_k=lamda/alpha} may be considered as an extension of previous results on the second moment of the solutions of 1D GLE, e.g.,~\cite[Equation (2.7)]{hohenegger2017equipartition} and \cite[Proposition 2.1]{hohenegger2017fluid}, in which the kernels considered have a special finite-sum-of-exponentials form. By employing a contour integral method similar to what is used in ~\cite{hohenegger2017equipartition,kou2008stochastic}, we are able to generalize these results to any kernel satisfying Assumption~\ref{cond:K}. The proof of Lemma~\ref{lem:int.rho_k=lamda/alpha} will be presented in detail at the end of the section. With Lemma~\ref{lem:int.rho_k=lamda/alpha} in hand, we are ready to assert the existence of weak stationary solutions of~\eqref{eq:spatial-gle}. Since the argument is short, we include it here for the sake of completeness.

\begin{proof}[Proof of Theorem~\ref{thm:spatial-gle:well-posed}]
Let $u_k$ be the stationary Gaussian process defined in Proposition~\ref{prop:spatial-gle:mode:process}. Verifying the existence of $u(t,\xbf)=\sum_{k\ge 1}u_k(t)e_k(\xbf)$ essentially amounts to checking the first condition of Definition~\ref{def:spatial-gle:solution}. To this end, in view of~\eqref{eq:spatial-gle:mode:covariance} together with~\eqref{eq:int.rho_k=lambda/alpha}, we have 
\begin{align*}
\Enone\|u(t,\cdot)\|_H^2=\sum_{k\geq 1}\Enone|u_k(t)|^2=\sum_{k\ge 1}\int_0^\infty\close  \rho_k(\omega)\d\omega=\sum_{k\ge 1}\frac{\lambda_k^2}{\alpha_k}.
\end{align*}
This implies that Definition~\ref{def:spatial-gle:solution} (a) ($\Enone\|u(t,\cdot)\|_H^2$ being finite) is equivalent to Assumption~\ref{cond:alpha-k:wellposed} being satisfied. The proof is thus complete.
\end{proof}

We now discuss the proof of Lemma~\ref{lem:int.rho_k=lamda/alpha}. As mentioned earlier, we will follow closely the strategy in~\cite{didier2021generalized} and make use of contour integrals on the complex plane $\cbb$ to evaluate $\int_\rbb \rho_k(\omega)\d\omega$. We note that in previous results \cite{hohenegger2017equipartition,hohenegger2017fluid} for memory kernels having a sum-of-exponentials form, the typical approach \cite{hohenegger2017equipartition,kou2008stochastic} is to employ functions similar to $f_k(z)$ as in~\eqref{form:f_k(z)} below and their contour integrals on the upper half complex plane. The arguments for these results rely on a careful analysis on the locations of the poles for the functions therein. In our method, instead of working with the upper half complex plane, we shift the analysis to the lower half plane. The novelty in this approach is that the function that we study, Eq.~\eqref{form:f_k(z)}, is actually analytic, which allows us to include more general kernels, such as those described in Assumption~\ref{cond:K}. To be precise, we have the following lemma that will be employed to prove Lemma~\ref{lem:int.rho_k=lamda/alpha}. 

\begin{lemma} \label{lem:f_k} Suppose that the memory kernel $K$ satisfies Assumption~\ref{cond:K}. Let $f_k(z)$ be a complex--valued function given by
\begin{equation} \label{form:f_k(z)}
f_k(z)=\frac{1}{i\big(z-\alpha_k\Ksin(z)\big)+\alpha_k\Kcos(z)},
\end{equation}
where $\alpha_k$ is as defined in Assumption~\ref{cond:A}. Then, $f_k(z)$ is analytic on the lower half complex plane $\cbb^-\sm$ where
$$\cbb^-=\{z\in\cbb:\emph{\Im}(z)\le 0\}.$$
\end{lemma}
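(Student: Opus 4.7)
The plan is to extend $\Kcos$ and $\Ksin$ to complex arguments via the Hausdorff--Bernstein--Widder measure $\mu$, consolidate the denominator of $f_k(z)$ into a single integral against $\mu$, and then verify analyticity and non-vanishing of that consolidated denominator on $\cbb^-\sm$ separately.

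\textbf{Step 1 (Consolidation).} Motivated by Lemma~\ref{lem:complete-monotone:fourier}, I would extend the cosine and sine transforms to complex $z$ by setting
\begin{equation*}
\Kcos(z)=\int_0^\infty\frac{x}{x^2+z^2}\mu(\d x),\qquad \Ksin(z)=\int_0^\infty\frac{z}{x^2+z^2}\mu(\d x).
\end{equation*}
Each of these is meromorphic in $z$ with apparent poles on the imaginary axis at $z=\pm ix$. However, the factorization $x^2+z^2=(x-iz)(x+iz)$ gives the key identity
\begin{equation*}
g_k(z):=i\bigl(z-\alpha_k\Ksin(z)\bigr)+\alpha_k\Kcos(z)=iz+\alpha_k\int_0^\infty\frac{1}{x+iz}\mu(\d x),
\end{equation*}
so the spurious poles of $\Kcos(z)$ and $\Ksin(z)$ on the negative imaginary axis cancel, leaving a function whose only obstruction to analyticity on $\cbb^-$ is the locus where $x+iz=0$, i.e.\ the single point $z=0$ (since $x\ge 0$ and $\Im z\le 0$ force $x+iz=0\iff x=z=0$).

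\textbf{Step 2 (Analyticity on $\cbb^-\sm$).} Writing $z=a+ib$ with $b\le 0$, the elementary bound
\begin{equation*}
|x+iz|^{2}=(x-b)^{2}+a^{2}\ge b^{2}+a^{2}=|z|^{2}
\end{equation*}
shows that $1/(x+iz)$ is uniformly bounded by $1/|z|$ in $x\ge 0$. Since $\mu$ is finite (Remark~\ref{rem:CMb}) and $z\mapsto 1/(x+iz)$ is holomorphic in $z$ on any open subset of $\cbb^-\sm$ for each fixed $x\ge 0$, the Dominated Convergence Theorem together with Morera's theorem (or differentiation under the integral sign) give that $z\mapsto \int_0^\infty 1/(x+iz)\mu(\d x)$ is holomorphic on $\cbb^-\sm$. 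Hence $g_k$ is analytic there.

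\textbf{Step 3 (Non-vanishing).} It remains to show $g_k(z)\ne 0$ on $\cbb^-\sm$, which I would do by computing the real part. For $z=a+ib$ with $b\le 0$,
\begin{equation*}
\Re g_k(z)=-b+\alpha_k\int_0^\infty\frac{x-b}{(x-b)^{2}+a^{2}}\mu(\d x),
\end{equation*}
and both summands are nonnegative since $-b\ge 0$ and $x-b\ge 0$. If $b<0$ this yields $\Re g_k(z)\ge -b>0$. If $b=0$ (so necessarily $a\ne 0$), then $\Re g_k(z)=\alpha_k\int_0^\infty x/(x^{2}+a^{2})\mu(\d x)$, which is strictly positive because, as established in the proof of Lemma~\ref{lem:complete-monotone:fourier}, Assumption~\ref{cond:K} forces $\mu(\{0\})=0$, and $\mu$ is not identically zero. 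Hence $g_k$ never vanishes on $\cbb^-\sm$, and $f_k=1/g_k$ is analytic there.

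\textbf{Main obstacle.} Computationally, the pivot of the argument is the cancellation in Step~1, which is a one-line manipulation but conceptually essential: without it, $f_k$ would appear to fail analyticity at every $z=-ix$ with $x\in\text{supp}(\mu)\cap(0,\infty)$. Conceptually, the most delicate step is the boundary case $b=0$ in Step~3, where the strict positivity of $\Re g_k$ genuinely relies on the decay hypothesis $K\downarrow 0$ from Assumption~\ref{cond:K} in order to rule out $\mu$ being a point mass at the origin.
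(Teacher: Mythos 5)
Your proof is correct and follows essentially the same route as the paper: the same consolidation of the denominator to $iz+\alpha_k\int_0^\infty(x+iz)^{-1}\mu(\d x)$, the same non-vanishing argument via a strictly positive real part, and analyticity derived from the same key bound $|x+iz|\ge|z|$ on $\cbb^-\sm$ together with finiteness of $\mu$. The only variation is that you obtain analyticity via Morera's theorem / differentiation under the integral with dominated convergence, whereas the paper expands in a power series about each $z_0\in\cbb^-\sm$ and checks absolute convergence directly; your treatment of the boundary case $\Im z=0$ in Step 3, explicitly invoking $\mu(\{0\})=0$, is slightly more careful than the paper's brief appeal to $\mu$ being non-null.
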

\begin{proof} First of all, in view of~\eqref{eq:complete-monotone:fourier} and~\eqref{form:KcosKsin}, we recast $f_k(z)$ as
\begin{align} \label{form:f_k:z=u-iv}
f_k(z)&=\frac{1}{\alpha_k (\Kcos(z)-i\Ksin(z))+iz}=\frac{1}{\alpha_k \int_0^\infty \frac{1}{iz+x}\mu(\d x)+iz}.
\end{align}
We now proceed to prove that $(f_k(z))^{-1}$ given by
$$(f_k(z))^{-1}=\alpha_k \int_0^\infty\close \frac{1}{iz+x}\mu(\d x)+iz,$$
is analytic and does not admit any root in $\cbb^-\sm$, which in turn implies that $f_k(z)$ is analytic in $\cbb^-\sm$.

To verify the analyticity of $(f_k(z))^{-1}$, it suffices to show $\int_0^\infty \frac{1}{iz+x}\mu(\d x)$ is analytic in $\cbb^-\sm$. To this end, for any $z_0\in\cbb^-\sm$, consider $z\in \cbb$ such that $|z-z_0|<|z_0|/2$ and observe that
\begin{align*}
\int_0^\infty\close \frac{1}{iz+x}\mu(\d x)&=\int_0^\infty\close \frac{1}{(iz_0+x)\big(\frac{iz-iz_0}{iz_0+x}+1\big)}\mu(\d x)=\sum_{n\ge 0}\int_0^\infty\close \frac{(-i)^n}{(iz_0+x)^{n+1}}\mu(\d x)(z-z_0)^n.
\end{align*}
We note that the last implication above is still formal. We now claim that for any $z$ such that $|z-z_0|<|z_0|/2 $, the right hand side series above actually converges absolutely. Indeed, by using the fact that $z_0=u-iv\in\cbb^-\sm$ with $u\in\rbb$, $v\ge 0$ and $u^2+v^2\neq 0$, we have the estimate
\begin{align*}
\sum_{n\ge 0}\int_0^\infty\close \frac{1}{|iz_0+x|^{n+1}}\mu(\d x)|z-z_0|^n&\le \sum_{n\ge 0}\int_0^\infty\close \frac{|z_0|^n}{2^n|iz_0+x|^{n+1}}\mu(\d x)\\
&=\sum_{n\ge 0}\int_0^\infty\close \frac{(u^2+v^2)^{n/2}}{2^n|u^2+(x+v)^2|^{(n+1)/2}}\mu(\d x)\\
&\le \sum_{n\ge 0}\int_0^\infty\close \frac{1}{2^n|u^2+v^2|^{1/2}}\mu(\d x)\\
&=\frac{1}{|z_0|}\mu([0,\infty))<\infty,
\end{align*}
where the last implication follows from the fact that $\mu$ is a finite measure as $K\in \CMb$, cf. Remark~\ref{rem:CMb}. This proves the analyticity of $(f_k(z))^{-1}$.

To verify that $(f_k(z))^{-1}$ does not have any roots in $\cbb^-\sm$, similar to the above estimates, we rewrite $z=u-iv$ where $u\in\rbb$, $v\ge 0$, $u^2+v^2\neq 0$, and observe that after a tedious but routine calculation
\begin{align*} 
\Re\big((f_k(z))^{-1}\big)=\alpha_k \int_0^\infty\close  \frac{x+v}{(x+v)^2+u^2}\mu(\d x)+v>0,
\end{align*}
since $\mu$ is not null on $[0,\infty)$. The proof is thus complete.

\end{proof}

We now give the proof of Lemma~\ref{lem:int.rho_k=lamda/alpha}, which is a slightly rework of the proof of \cite[Lemma 4.4]{didier2021generalized} tailored to our setting. See also \cite[Theorem 4.2]{kou2008stochastic}.
\begin{proof}[Proof of Lemma~\ref{lem:int.rho_k=lamda/alpha}] We first note that the spectral density $\rho_k$ in~\eqref{eq:spectral-density} can be written as 
\begin{align} \label{eq:spectral-density:transform}
\rho_k(\omega)= \frac{1}{\pi}\cdot\frac{\lambda_k^2\Kcos(\omega)}{\alpha^2_k\Kcos(\omega)^2+\big(\omega-\alpha_k\Ksin(\omega)\big)^2},
\end{align}
since $K$ is assumed to be even, and thus, $\widehat{K}(\omega)=2\Kcos(\omega)$. It follows that 
$$\int_\rbb\rho_k(\omega)\d\omega=\frac{1}{\pi}\int_0^\infty\close \frac{2\lambda_k^2\Kcos(\omega)}{\alpha^2_k\Kcos(\omega)^2+\big(\omega-\alpha_k\Ksin(\omega)\big)^2}\d\omega.$$
We aim to make use of contour integrals of $f_k(z)$ as in~\eqref{form:f_k(z)} to calculate the above integral. For $R>0$, we introduce the outer and inner half circles in $\cbb^-\sm$ given by
\begin{equation} \label{form:curve:C_R^-.and.C_(1/R)^-}
 C_R=\{Re^{i\theta}:-\pi\le \theta\le 0\} \quad\text{and}\quad C_{1/R}= \{e^{i\theta}/R:-\pi\le \theta\le 0\}.
\end{equation}
Also, let $C(R)$ denote the closed curve in $\cbb^-\sm$ oriented clockwise as follows:
\begin{equation}\label{form:curve:C(R)}
C(R)=[-R,-1/R]\cup C_{1/R}\cup[1/R,R]\cup C_R.
\end{equation}
Recall $f_k(z)$ from~\eqref{form:f_k(z)}. In light of Lemma~\ref{lem:f_k}, $f_k$ is analytic in $\cbb^-\sm$, implying that for all $R>0$
\begin{align*}
\int_{C(R)}\close f_k(z)\d z=0.
\end{align*}
On the other hand, we can decompose the above contour integral as follows:
\begin{align*}
\int_{C(R)}\close f_k(z)\d z&=\Big\{\int_{-R}^{-1/R}\close +\int_{C_{1/R}}\close +\int_{1/R}^R+\int_{C_R}\Big\}f_k(z)\d z\\&=I_1(R)+I_2(R)+I_3(R)+I_4(R).
\end{align*}

In view of the expression for $f_k$ in~\eqref{form:f_k(z)}, we have
$$I_3(R) =\int_{1/R}^R\frac{\d\omega}{\alpha_k\Kcos(\omega) + i(\omega-\alpha_k \Ksin(\omega))}.$$

Concerning $I_1(R)$, we recall that $\Kcos(\omega)$ is even whereas $\Ksin(\omega)$ is odd. Thus, by a change of variable $z:=-\omega$, we obtain
\begin{align*}
I_1(R)
&=\int_{1/R}^{R} \frac{\d \omega}{\alpha_k \Kcos(\omega) - i(\omega-\alpha_k \Ksin(\omega))}.
\end{align*}
It follows immediately that
$$I_1(R) +I_3(R) =\int_{1/R}^R\frac{2\alpha_k\Kcos(\omega)}{\alpha_k^2 \Kcos(\omega)^2 +\big(\omega-\alpha_k \Ksin(\omega)\big)^2}\d\omega.$$
Since $\Kcos(\omega)>0$, cf.~\eqref{eq:complete-monotone:fourier}, by the Monotone Convergence Theorem, we obtain
$$I_1(R) +I_3(R)\to\pi \frac{\alpha_k}{\lambda_k^2}\int_0^\infty\close \rho_k(\omega)\d\omega\quad\text{as}\quad R\to\infty,$$
where $\rho_k$ is as in~\eqref{eq:spectral-density:transform}.

Concerning $I_2(R)$ on the inner half circle $C_{1/R}$, we aim to show that its limit is zero as $R$ tends to infinity. Indeed, recall the form of $f_k(z)$ given in~\eqref{form:f_k:z=u-iv}. For $z=u-iv\in \cbb^-\sm$ such that $|z|$ is small, namely 
\begin{displaymath}
	|z|<\min\Big\{1,\frac{\alpha_k}{8\sqrt{2}}\int_0^\infty \frac{1}{x+1}\mu(\d x)\Big\},
\end{displaymath}
observe that
\begin{align*}
|f_k(z)^{-1}|&=\Big|\alpha_k\int_0^\infty\close  \frac{x+v}{(x+v)^2+u^2}\mu(\d x)+v+i\Big(u-\alpha_k\int_0^\infty \close \frac{u}{(x+v)^2+u^2}\mu(\d x)\Big)\Big|\\
&\ge \alpha_k\Big|\int_0^\infty\close  \frac{x+v}{(x+v)^2+u^2}\mu(\d x)-i\int_0^\infty \close \frac{u}{(x+v)^2+u^2}\mu(\d x)\Big)\Big|-|v+iu|\\
&\ge \frac{\alpha_k}{\sqrt{2}}\int_0^\infty\close  \frac{x+v+|u|}{(x+v)^2+u^2}\mu(\d x)-|z|.
\end{align*}
To obtain the last inequality, we employed the lower bound $\sqrt{2}|a-ib|\ge |a|+|b|$ for $a,b\in\rbb$. To further bound $f_k(z)^{-1}$ from below, we use the elementary inequality for $x, v\ge 0$ and $|v|, |u|\le 1$
$$\frac{x+v+|u|}{(x+v)^2+u^2}\ge \frac{1}{2(x+1)},$$
to estimate
\begin{align*}
\frac{\alpha_k}{\sqrt{2}}\int_0^\infty\close  \frac{x+v+|u|}{(x+v)^2+u^2}\mu(\d x)-|z|
&\ge \frac{\alpha_k}{2\sqrt{2}}\int_0^\infty\close \frac{1}{x+1}\mu(\d x)-|z|\\
&\ge \frac{\alpha_k}{4\sqrt{2}}\int_0^\infty\close \frac{1}{x+1}\mu(\d x)>0,
\end{align*}
since $\mu$ is not null on $[0,\infty)$. As a consequence, we obtain
\begin{align*}
|f_k(z)^{-1}|\ge  \frac{\alpha_k}{4\sqrt{2}}\int_0^\infty\close \frac{1}{x+1}\mu(\d x).
\end{align*}
Thus, by making the change of variable $z:=R^{-1}e^{i\theta}$ for all $R$ sufficiently large, we have
\begin{align*}
|I_2(R)|&=\Big|\int_{-\pi}^0\frac{R^{-1}e^{i\theta}i\d\theta}{\alpha_k \Kcos(R^{-1} e^{i\theta}) + i(R^{-1} e^{i\theta}-\alpha_k \Ksin(R^{-1} e^{i\theta}))}\Big|\\
&\le\frac{4\sqrt{2}R^{-1}}{\alpha_k\int_0^\infty\frac{1}{x+1}\mu(\d x)} \int_{-\pi}^0 \d\theta,
\end{align*}
which clearly converges to zero as $R$ tends to infinity. 

Likewise, for $I_4(R)$, we note that for $z=u-iv\in \cbb^-\sm$ such that $|z|>1$,
\begin{align*}
|\Kcos(z)-i\Ksin(z)|&=\Big|\int_0^\infty \frac{x+v-iu}{(x+v)^2+u^2}\mu(\d x) \Big|\\
&\le \int_0^\infty\close  \frac{x+v+|u|}{(x+v)^2+u^2}\mu(\d x)\\
&\le \int_0^\infty\close  \frac{2}{x+v+|u|}\mu(\d x)\\
&\le 2\int_0^\infty\close  \mu(\d x)=2\mu([0,\infty)),
\end{align*}
since $\mu$ is assumed to be finite measure on $[0,\infty)$, cf. Remark~\ref{rem:CMb}. It follows that by making the change of variable $z:=Re^{i\theta}$, it holds that
\begin{align*}
I_4(R)&=\int_0^{-\pi}\close \frac{Re^{i\theta}i\d\theta}{\alpha_k \Kcos(R e^{i\theta}) +i(R e^{i\theta}-\alpha_k \Ksin(R e^{i\theta}))}\\
&=\int_0^{-\pi}\close \frac{\d\theta}{\frac{\alpha_k }{iRe^{i\theta}}  \big(\Kcos(Re^{i\theta})-i\Ksin(Re^{i\theta})\big)  +1 },
\end{align*}
which converges to $-\pi$ as $R$ tends to infinity by virtue of the Dominated Convergence Theorem. 

We collect the above limits to arrive at the the following
\begin{align*}
0=\lim_{R\to\infty}\int_{C(R)}\close f_k(z)\d z = \pi\frac{\alpha_k}{\lambda_k^2}\int_0^\infty \close \rho_k(\omega)\d\omega-\pi,
\end{align*}
which in turn implies~\eqref{eq:int.rho_k=lambda/alpha}. This finishes the proof.

\end{proof}

\subsection{Proof of Theorem~\ref{thm:spatial-gle:Holder}}

We now turn our attention to the main theorem of paper, namely, the regularity of weak stationary solutions $u(t,\xbf)$. As discussed in Section~\ref{sec:result}, in order to prove Theorem~\ref{thm:spatial-gle:Holder}, we will employ the classical Kolmogorov's criterion to establish H\"older continuity. To this end, we must obtain useful estimates on differences in time, $u_k(t)-u_k(s)$ ($t,s\in\rbb$), as well as their second moments under Assumption~\ref{cond:K}. In Proposition~\ref{prop:spatial-gle:mode:Holder} below, we assert a bound on the difference $u_k(t)-u_k(s)$, which will be employed later to prove H\"older regularities in time of $u(t,\xbf)$.
\begin{proposition}  \label{prop:spatial-gle:mode:Holder}
Suppose that the memory kernel $K$ satisfies Assumption~\ref{cond:K}. For $k\geq 1$, let $u_k(t)$ be the stationary process as in Proposition~\ref{prop:spatial-gle:mode:process}. Then there exists an index $k^*$ large enough such that for any~$\alpha,\,q\in(0,1)$, there exists a constant $c=c(\alpha,q,k^*)>0$ independent of all $k$ such that for all $t,\,s\in\rbb$, $1\le k\le k^*$,
\begin{equation} \label{ineq:spatial-gle:mode:Holder:k<k^*}
\E|u_k(t)-u_k(s)|^2<c|t-s|^\alpha,
\end{equation}
and for all $k>k^*$,
\begin{equation} \label{ineq:spatial-gle:mode:Holder}
\E|u_k(t)-u_k(s)|^2<c \frac{\lambda_k^2}{\alpha_k^{q-\alpha/2}}|t-s|^\alpha.
\end{equation}
\end{proposition}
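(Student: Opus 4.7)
The plan is to combine the spectral representation
\begin{align*}
\E|u_k(t)-u_k(s)|^2 = 2\int_\rbb\big(1-\cos((t-s)\omega)\big)\rho_k(\omega)\d\omega
\end{align*}
from Proposition~\ref{prop:spatial-gle:mode:process}(c) with the elementary bound $|1-\cos x|\le 2^{1-\alpha}|x|^\alpha$ valid for $\alpha\in[0,2]$. This reduces the task to controlling $\int_\rbb|\omega|^\alpha\rho_k(\omega)\d\omega$: uniformly over $k\le k^*$ to obtain~\eqref{ineq:spatial-gle:mode:Holder:k<k^*}, and by $C\lambda_k^2/\alpha_k^{q-\alpha/2}$ for $k>k^*$ to obtain~\eqref{ineq:spatial-gle:mode:Holder}.

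The $k\le k^*$ case is handled directly from the representation~\eqref{eq:spectral-density:transform}: the bound $\Kcos(\omega)\le C/\omega$ from Lemma~\ref{lem:complete-monotone:fourier-analysis}(a), together with $\Ksin(\omega)/\omega\to 0$ (which gives $|\omega-\alpha_k\Ksin(\omega)|\ge \omega/2$ for $\omega$ sufficiently large), yields $\rho_k(\omega)=O(|\omega|^{-3})$ at infinity. Since $\rho_k$ is also continuous and bounded on compact sets, $\int|\omega|^\alpha\rho_k\d\omega$ is finite for each $\alpha<2$, and the supremum over the finitely many $k\le k^*$ is finite.

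For $k>k^*$, $\rho_k$ develops a sharp peak around the resonance point $\omega_k$ provided by Lemma~\ref{lem:complete-monotone:fourier-analysis}(c), with $\omega_k\asymp\alpha_k^{1/2}$ since $\omega_k^2/\alpha_k\to K(0)>0$. Using the evenness of $\rho_k$, I split
\begin{align*}
\int_\rbb|\omega|^\alpha\rho_k(\omega)\d\omega=2(I_1+I_2+I_3),
\end{align*}
corresponding to the intervals $(0,\omega_k-\omega_k^q)$, $[\omega_k-\omega_k^q,\omega_k+\omega_k^q]$, and $(\omega_k+\omega_k^q,\infty)$. On the first two, the crude bound $\omega^\alpha\le(2\omega_k)^\alpha\le C\alpha_k^{\alpha/2}$ combined with the total-mass identity from Lemma~\ref{lem:int.rho_k=lamda/alpha} gives $I_1+I_2\le C\alpha_k^{\alpha/2}\cdot\lambda_k^2/\alpha_k=C\lambda_k^2/\alpha_k^{1-\alpha/2}$, which is dominated by $C\lambda_k^2/\alpha_k^{q-\alpha/2}$ since $q\le 1$.

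The main obstacle is the far-field integral $I_3$, where the total-mass bound is too crude to give the right $k$-dependence. The key observation is that $h(\omega):=1-\alpha_k\Ksin(\omega)/\omega$ is increasing on $(0,\infty)$, since $\Ksin(\omega)/\omega$ is decreasing by Lemma~\ref{lem:complete-monotone:fourier-analysis}(c). Applying the resonance bound of Lemma~\ref{lem:complete-monotone:fourier-analysis}(d) at the single point $\omega=\omega_k+\omega_k^q$ and then extending by monotonicity yields $h(\omega)\ge c\omega_k^{q-1}$, hence $|\omega-\alpha_k\Ksin(\omega)|\ge c\omega\omega_k^{q-1}$, for every $\omega\ge\omega_k+\omega_k^q$. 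Combining this with $\Kcos(\omega)\le C/\omega$ gives
\begin{align*}
\rho_k(\omega)\le \frac{\lambda_k^2\Kcos(\omega)}{\pi(\omega-\alpha_k\Ksin(\omega))^2}\le \frac{C\lambda_k^2\omega_k^{2(1-q)}}{\omega^3}
\end{align*}
on the far-field range. Integrating $\omega^\alpha$ against this bound (convergent since $\alpha<2$) and invoking $\omega_k\asymp\alpha_k^{1/2}$ yields $I_3\le C\lambda_k^2\omega_k^{\alpha-2q}\le C\lambda_k^2/\alpha_k^{q-\alpha/2}$. Assembling the three contributions completes~\eqref{ineq:spatial-gle:mode:Holder}.
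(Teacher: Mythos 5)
Your proof is correct, and while it shares the paper's strategy for the far field it takes a genuinely simpler route through the near field. Both you and the paper reduce the problem via $1-\cos(x)\lesssim |x|^\alpha$ to bounding $\int \omega^\alpha\rho_k(\omega)\,\d\omega$, and both handle the tail $\omega\ge \omega_k+\omega_k^q$ the same way: the monotonicity of $\Ksin(\omega)/\omega$ propagates the one-point lower bound from Lemma~\ref{lem:complete-monotone:fourier-analysis}(d) at $\omega=\omega_k+\omega_k^q$ to the whole tail, giving $\rho_k(\omega)\lesssim \lambda_k^2\omega_k^{2(1-q)}\omega^{-3}$ and hence the dominant contribution $\lambda_k^2/\alpha_k^{q-\alpha/2}$. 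Where you diverge is the near and inner region $(0,\omega_k+\omega_k^q)$: the paper splits this further into four pieces and runs careful Young-inequality estimates near the resonance, producing terms of order $\alpha_k^{-(1+q-\alpha)/2}$, $\log\alpha_k\cdot\alpha_k^{-(1-\alpha/2)}$ and $\alpha_k^{-(1-\alpha/2)}$; you instead observe that on this region $\omega^\alpha\lesssim \omega_k^\alpha\lesssim \alpha_k^{\alpha/2}$ and simply invoke the total-mass identity $\int_\rbb\rho_k=\lambda_k^2/\alpha_k$ of Lemma~\ref{lem:int.rho_k=lamda/alpha} (which the paper proves anyway for well-posedness but never reuses here), obtaining $\lambda_k^2/\alpha_k^{1-\alpha/2}$ in one line. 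Since $q<1$, this is already dominated by the far-field term. Your argument thus avoids the entire fine decomposition around $\omega_k$ at the cost of essentially nothing, which is a real simplification. Two small things worth making explicit if this were written out fully: (i) the inequality $1-\cos x\le 2^{1-\alpha}|x|^\alpha$ indeed holds for $\alpha\in[0,2]$ by checking $|x|\ge 2$ and $|x|<2$ separately; (ii) for the crude $\omega^\alpha\le (2\omega_k)^\alpha$ you need $\omega_k^q<\omega_k$, i.e.\ $\omega_k>1$, which is exactly the condition defining $k^*$ via Lemma~\ref{lem:complete-monotone:fourier-analysis}(d).
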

As a consequence of Proposition~\ref{prop:spatial-gle:mode:Holder}, we have the following corollary asserting a useful bound on the second moment of $u_k(t)$, which will be employed to prove the spatial H\"older regularity of $u(t,\xbf)$.
\begin{corollary} \label{cor:Holder_space}
Suppose that the memory kernel $K$ satisfies Assumption~\ref{cond:K}. For $k\geq 1$, let $u_k(t)$ be the stationary process as in Proposition~\ref{prop:spatial-gle:mode:process}. Let $k^*$ be the same index as in Proposition~\ref{prop:spatial-gle:mode:Holder}. Then for any~$q\in(0,1)$, there exists a constant $c=c(q,k^*)>0$ such that for all $t\in\rbb$, $1\le k\le k^*$,
\begin{equation} \label{ineq:spatial-gle:mode:Holder:2:k<k^*}
\mathbb{E}|u_k(t)|^2< c,
\end{equation}
and for all $k>k^*$, 
\begin{equation} \label{ineq:spatial-gle:mode:Holder:2}
\mathbb{E}|u_k(t)|^2< c\frac{\lambda_k^2}{\alpha_k^{q}}.
\end{equation}
\end{corollary}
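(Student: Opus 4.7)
The plan is to reduce the corollary directly to Lemma~\ref{lem:int.rho_k=lamda/alpha} via the covariance representation from Proposition~\ref{prop:spatial-gle:mode:process}(c), and then exploit the divergence of $\{\alpha_k\}$ to absorb the exponent defect. First I would observe that, by Proposition~\ref{prop:spatial-gle:mode:process}(b), $u_k(t)$ is a zero-mean stationary Gaussian process, so $\mathbb{E}|u_k(t)|^2$ is independent of $t$. Setting $s=t$ in the covariance identity~\eqref{eq:spatial-gle:mode:covariance} gives
\begin{equation*}
\mathbb{E}|u_k(t)|^2 \;=\; \int_{\mathbb{R}} \rho_k(\omega)\,d\omega,
\end{equation*}
and Lemma~\ref{lem:int.rho_k=lamda/alpha} evaluates this integral to the exact value $\lambda_k^2/\alpha_k$.

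Next I would split into the two regimes. For $1\le k\le k^*$ there are only finitely many indices, so taking $c:=\max_{1\le k\le k^*}\lambda_k^2/\alpha_k$ immediately yields~\eqref{ineq:spatial-gle:mode:Holder:2:k<k^*}. For $k>k^*$, Assumption~\ref{cond:A} forces $\alpha_k\uparrow\infty$; after possibly enlarging $k^*$ (consistent with the way $k^*$ is selected in Proposition~\ref{prop:spatial-gle:mode:Holder}, where it only had to be sufficiently large), one has $\alpha_k\ge 1$ for every $k>k^*$. Consequently, for any fixed $q\in(0,1)$,
\begin{equation*}
\mathbb{E}|u_k(t)|^2 \;=\; \frac{\lambda_k^2}{\alpha_k} \;\le\; \frac{\lambda_k^2}{\alpha_k^{q}},
\end{equation*}
which is~\eqref{ineq:spatial-gle:mode:Holder:2} with $c=1$.

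I do not anticipate a real obstacle: the corollary is essentially a repackaging of the exact identity $\mathbb{E}|u_k(t)|^2=\lambda_k^2/\alpha_k$ established in Lemma~\ref{lem:int.rho_k=lamda/alpha}, combined with the trivial inequality $\alpha_k^{-1}\le\alpha_k^{-q}$ on the tail where $\alpha_k\ge 1$. The only bookkeeping point is that $k^*$ must be simultaneously large enough for the estimates in Proposition~\ref{prop:spatial-gle:mode:Holder} to apply \emph{and} for $\alpha_k\ge 1$ to hold, but both requirements are automatically met once $k^*$ is taken large enough since $\alpha_k\to\infty$. One could alternatively bypass Lemma~\ref{lem:int.rho_k=lamda/alpha} and estimate $\int_{\mathbb{R}}\rho_k(\omega)\,d\omega$ by splitting the integral on and off a neighborhood $[\omega_k-\omega_k^q,\omega_k+\omega_k^q]$ of the resonance frequency $\omega_k$ from Lemma~\ref{lem:complete-monotone:fourier-analysis}(c)--(d), using~\eqref{ineq:ksin} to control the denominator of $\rho_k$, and then using $\omega_k^2/\alpha_k\to K(0)$; this route recovers~\eqref{ineq:spatial-gle:mode:Holder:2} directly from the same Fourier-analytic ingredients that drive the proof of Proposition~\ref{prop:spatial-gle:mode:Holder}, but it yields strictly less information than the direct identity above.
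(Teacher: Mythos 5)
Your proof is correct, and it takes a genuinely different and notably cleaner route than the paper. The paper proves Corollary~\ref{cor:Holder_space} by repeating the Fourier-analytic machinery of Proposition~\ref{prop:spatial-gle:mode:Holder}: it writes $\Enone|u_k(t)|^2$ as an integral of $\rho_k$, splits the domain as $\int_0^1 + \int_1^\infty$, bounds the first piece using the monotonicity of $\Kcos$ and the second by inserting a factor $\omega^{r_2}$ and invoking the intermediate estimate~\eqref{ineq:spatial-gle:mode:Holder:1} derived inside the proof of the proposition. Your observation is that none of this is necessary, because Lemma~\ref{lem:int.rho_k=lamda/alpha} already records the exact identity $\Enone|u_k(t)|^2=\int_\rbb \rho_k(\omega)\,\d\omega = \lambda_k^2/\alpha_k$ (via~\eqref{eq:spatial-gle:mode:covariance} at $s=t$), which is strictly stronger than the bound asked for. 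The corollary then reduces to $\alpha_k^{-1}\le c\,\alpha_k^{-q}$, which is immediate since $q\in(0,1)$ and $\alpha_k\uparrow\infty$. What each approach buys: the paper's argument is modular in the sense that the corollary appears literally as a by-product of the proposition's proof, whereas yours exposes that the bound is a triviality once the contour-integral identity is available, and in fact gives the sharp value.

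One small bookkeeping remark: you do not actually need to enlarge $k^*$ or insist on $\alpha_k\ge 1$. Since $\alpha_k\ge\alpha_1>0$ by Assumption~\ref{cond:A} and $q-1<0$, one has $\alpha_k^{q-1}\le \max(1,\alpha_1^{q-1})$ uniformly in $k$, so
\begin{equation*}
\frac{\lambda_k^2}{\alpha_k} \;=\; \frac{\lambda_k^2}{\alpha_k^q}\,\alpha_k^{q-1} \;\le\; \max\bigl(1,\alpha_1^{q-1}\bigr)\,\frac{\lambda_k^2}{\alpha_k^q}
\end{equation*}
holds for every $k$, with $c=c(q)=\max(1,\alpha_1^{q-1})$; there is no need to disturb the index $k^*$ inherited from Proposition~\ref{prop:spatial-gle:mode:Holder}. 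With that simplification, your argument is fully self-contained and correct.
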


The proofs of Proposition~\ref{prop:spatial-gle:mode:Holder} and Corollary~\ref{cor:Holder_space} will be deferred to the end of this section. We are now in a position to prove Theorem~\ref{thm:spatial-gle:Holder}.

\begin{proof}[Proof of Theorem~\ref{thm:spatial-gle:Holder}] Let $\eta$ be the constant from Assumption~\ref{cond:alpha-k:Holder}. Since the processes $u_k(\cdot)$ are mutually independent with zero mean, in view of Proposition~\ref{prop:spatial-gle:mode:Holder}, cf.~\eqref{ineq:spatial-gle:mode:Holder:k<k^*} and~\eqref{ineq:spatial-gle:mode:Holder}, we have the following estimates for any $t,\,s\in\rbb$ and $\xbf\in\domain$
\begin{align*}
\Enone|u(t,\xbf)-u(s,\xbf)|^2 &= \sum_{k\geq 1}\Enone|u_k(t)-u_k(s)|^2|e_k(\xbf)|^2 \\
&\leq c|t-s|^{\alpha}\sum_{k\geq 1}\frac{\lambda_k^2c_k^2}{\alpha_k^{q-\alpha/2}}\\
&< c|t-s|^{\alpha}\sum_{k\geq 1}\frac{\lambda_k^2c_k^2}{\alpha_k^{\eta}} ,
\end{align*}
where the last implication follows from the choice of $q,\,\alpha\in (0,1)$ satisfying $\eta<q-\alpha/2$, which in turn is always possible for any $\alpha/2\in(0,1-\eta)$. Since $u(t,\xbf)-u(s,\xbf)$ is Gaussian, we infer a constant $C(m)>0$ for $m>0$ such that
\begin{align*}
\Enone|u(t,\xbf)-u(s,\xbf)|^{2m} \leq C(m)|t-s|^{m\alpha}.
\end{align*}
By Kolmogorov's test for stochastic processes \cite[Theorem 3.3]{da2014stochastic}, there exists a version of $u$ that is H\"older continuous in $t$ for every $\gamma\in\big(0,\frac{\alpha}{2}-\frac{1}{2m}\big)$. By choosing $m$ sufficiently large and $\alpha/2$ as close to $1-\eta$ as possible, we obtain H\"older continuity in time $t$ for every $\gamma\in(0,1-\eta)$.

With regard to spatial regularity, we note that Assumption~\ref{cond:A} on $\nabla e_k$ implies the following bound for $\xbf,\ybf\in\domain$ and $\alpha\in(0,1)$ (see  \cite[Lemma 5.21]{da2014stochastic}) 
\begin{align*}
|e_k(\xbf)-e_k(\ybf)| \leq c(\alpha) \alpha_k^{\alpha/2}c_k|\xbf-\ybf|^\alpha.
\end{align*}
We then apply Corollary~\ref{cor:Holder_space}, cf.~\eqref{ineq:spatial-gle:mode:Holder:2:k<k^*} and~\eqref{ineq:spatial-gle:mode:Holder:2}, for $q,\,\alpha\in(0,1)$ to find
\begin{align*}
\Enone|u(t,\xbf)-u(t,\ybf)|^2 &\leq \sum_{k\geq 1}\Enone|u_k(t)|^2|e_k(\xbf)-e_k(\ybf)|^2 \\
&\leq c|\xbf-\ybf|^{2\alpha}\sum_{k\geq 1}\frac{\lambda_k^2 c_k^2}{\alpha_k^{q-\alpha}}\\
&< c|\xbf-\ybf|^{2\alpha}\sum_{k\geq 1}\frac{\lambda_k^2 c_k^2}{\alpha_k^{\eta}},
\end{align*}
which again is always possible for $\alpha\in(0,1-\eta)$. We then arrive at the estimate 
\begin{align*}
\Enone|u(t,\xbf)-u(t,\ybf)|^{2m} \leq C(m)|\xbf-\ybf|^{2m\alpha}.
\end{align*}
By the Kolmogorov test for random fields \cite[Theorem 3.5]{da2014stochastic}, $u$ is H\"older continuous in space for any $\gamma>0$ up to $\frac{2m\alpha-d}{2m}=\alpha-\frac{d}{2m}$ where $d$ is the spatial dimension. We finally choose $m$ sufficiently large and $\alpha$ as close to $1-\eta$ as possible to obtain $\gamma-$H\"older continuity in space for any $\gamma\in(0,1-\eta)$. This finises the proof.
\end{proof}

We now turn to the proof of Proposition~\ref{prop:spatial-gle:mode:Holder}. In order to establish Proposition~\ref{prop:spatial-gle:mode:Holder}, we will make use of the following elementary inequality: for any $\alpha\in(0,1)$ and $x\in\rbb$, we have
\begin{align}\label{lem:trig} 
1-\cos(x)\le\frac{2}{\alpha}|x|^\alpha.
\end{align}
Indeed, observe that if $|x|\ge 1$, then since $\alpha\in(0,1)$
\begin{align*}
\frac{2}{\alpha}|x|^\alpha\ge 2\ge 1-\cos(x).
\end{align*}
On the other hand, if $|x|<1$, then we have
\begin{align*}
1-\cos(x)\le \frac{x^2}{2}\le \frac{2}{\alpha}|x|^\alpha.
\end{align*}
We are now in a position to prove Proposition~\ref{prop:spatial-gle:mode:Holder}.
\begin{proof}[Proof of Proposition~\ref{prop:spatial-gle:mode:Holder}]
In view of~\eqref{eq:spatial-gle:mode:covariance} and~\eqref{eq:spectral-density:transform}, a straightforward calculation shows that
\begin{align*}
\Enone|u_k(t)-u_k(s)|^2&=\int_\rbb\close\big(2-e^{i(t-s)\omega}-e^{i(s-t)\omega}\big)\rho_k(\omega)\d\omega \\
&=\frac{2}{\pi}\int_0^\infty\close\close \left(1-\cos(\omega(t-s))\right)\frac{\lambda_k^2\Kcos(\omega)}{\alpha_k^2\Kcos^2(\omega)+\left(\omega-\alpha_k\Ksin(\omega)\right)^2}\d\omega\\
&\le\frac{4\lambda_k^2}{\pi\alpha}|t-s|^\alpha\int_0^\infty\close \frac{\omega^\alpha\Kcos(\omega)}{\alpha_k^2\Kcos^2(\omega)+\left(\omega-\alpha_k\Ksin(\omega)\right)^2}\d\omega,
\end{align*}
where in the last implication, for $\alpha\in(0,1)$, we have invoked Inequality~\eqref{lem:trig}. 

To verify~\eqref{ineq:spatial-gle:mode:Holder:k<k^*}, it suffices to prove that for any $k$, the above integral is finite. In view of Lemma~\ref{lem:complete-monotone:fourier}, we see that $$\lim_{\omega\to\infty}\Kcos(\omega)=\lim_{\omega\to\infty}\Ksin(\omega)=0,$$
implying the integrand is dominated by $\omega^{\alpha-2}$, which is integrable at infinity. On the other hand, when $\omega\to 0$, by Fatou's Lemma, it holds that $$\liminf_{\omega\to 0}\Kcos(\omega)\ge \int_0^\infty \frac{1}{x}\mu(\d x)>0,$$
implying the integrand is dominated by $\omega^\alpha(\alpha_k\int_0^\infty\frac{1}{x}\mu(\d x))^{-1}$ which is integrable around the origin. We thus combine two cases to infer the existence of a positive constant $c(k)$ such that
$$\int_0^\infty\close \frac{\omega^\alpha\Kcos(\omega)}{\alpha_k^2\Kcos^2(\omega)+\left(\omega-\alpha_k\Ksin(\omega)\right)^2}\,\d\omega\le c(k),$$
which proves~\eqref{ineq:spatial-gle:mode:Holder:k<k^*}.

Now, let $k^*$ be a large constant such that for $k>k^*$, $\omega_k>1$ is the unique solution on $(0,\infty)$ of Equation~\eqref{eq:complete-monotone:fourier:ksin} from Lemma~\ref{lem:complete-monotone:fourier-analysis} (c) and~(d). We now decompose the last integral as follows:
\begin{align*}
&\int_0^\infty\close  \frac{\omega^\alpha\Kcos(\omega)}{\alpha_k^2\Kcos^2(\omega)+\left(\omega-\alpha_k\Ksin(\omega)\right)^2}\d\omega\\
&=\Big\{\int_0^{\omega_k-\omega_k^q}+\int_{\omega_k-\omega_k^q}^{\omega_k-1}+\int_{\omega_k-1}^{\omega_k+1}+\int_{\omega_k+1}^{\omega_k+\omega_k^q}+\int_{\omega_k+\omega_k^q}^\infty\Big\} \frac{\omega^\alpha\Kcos(\omega)}{\alpha_k^2\Kcos^2(\omega)+\left(\omega-\alpha_k\Ksin(\omega)\right)^2}\d\omega\\
&= I_1+\dots+I_5.
\end{align*}
 
To estimate $I_1$, we recall from Lemma~\ref{lem:complete-monotone:fourier-analysis} (b) that $\Ksin(\omega)/\omega$ is decreasing on $\omega\in(0,\infty)$. Thus, for any $\omega\in(0,\omega_k-\omega_k^q]$, it follows that
\begin{align*}
\alpha_k\frac{\Ksin(\omega)}{\omega}-1\geq \alpha_k\frac{\Ksin(\omega_k-\omega_k^q)}{\omega_k-\omega_k^q}-1.
\end{align*} 
We then substitute $\omega:=\omega_k-\omega_k^q$ in Inequality~\eqref{ineq:ksin} to obtain
\begin{align*}
 \alpha_k\frac{\Ksin(\omega_k-\omega_k^q)}{\omega_k-\omega_k^q}-1\geq \frac{c}{\omega_k^{1-q}}.
\end{align*}
It follows that
\begin{align*}
I_1&=\int_0^{\omega_k-\omega_k^q}\close\close \frac{\omega^\alpha\Kcos(\omega)}{\alpha_k^2\Kcos^2(\omega)+\left(\omega-\alpha_k\Ksin(\omega)\right)^2}\d\omega\\
 &=\int_0^{\omega_k-\omega_k^q}\close\close \frac{\omega^\alpha\Kcos(\omega)}{\alpha_k^2\Kcos^2(\omega)+\omega^2\left(\alpha_k\frac{\Ksin(\omega)}{\omega}-1\right)^2}\d\omega\\
& \leq \int_0^{\omega_k-\omega_k^q}\close\close \frac{\omega^\alpha\Kcos(\omega)}{\alpha_k^2\Kcos^2(\omega)+c\omega^2/\omega_k^{2-2q} }\d\omega.
\end{align*}
We apply Young's product inequality to the above denominator to infer
\begin{align*}
 \int_0^{\omega_k-\omega_k^q}\close\close \frac{\omega^\alpha\Kcos(\omega)}{\alpha_k^2\Kcos^2(\omega)+c\omega^2/\omega_k^{2-2q} }\d\omega&\leq  c \int_0^{\omega_k-\omega_k^q} \frac{\omega_k^{1-q}\omega^{\alpha-1}}{\alpha_k }\d\omega
 \leq c\frac{\omega_k^{1-q+\alpha}}{\alpha_k}
 \leq \frac{c}{\alpha_k^{(1+q-\alpha)/2}},
\end{align*}
where in the last implication we have employed the fact that $\omega_k^2/\alpha_k$ is bounded uniformly with respect to $k$ since $\omega_k^2/\alpha_k\to K(0)$ as $k\to\infty$, by virtue of Lemma~\ref{lem:complete-monotone:fourier-analysis}~(c).

Similar to the argument on $I_1$, to estimate $I_5$, we note that if $\omega\in[\omega_k+\omega_k^q,\infty)$ then
\begin{align*}
1-\alpha_k\frac{\Ksin(\omega)}{\omega}\geq 1- \alpha_k\frac{\Ksin(\omega_k+\omega_k^q)}{\omega_k+\omega_k^q},
\end{align*}
and that substituting $\omega:=\omega_k+\omega_k^q$ in~\eqref{ineq:ksin} yields
\begin{align*}
1-\alpha_k\frac{\Ksin(\omega_k+\omega_k^q)}{\omega_k+\omega_k^q}\geq \frac{c}{\omega_k^{1-q}}.
\end{align*}
We then have a chain of implications
\begin{equation*}
\begin{aligned}
I_5=\int_{\omega_k+\omega_k^q}^\infty\frac{\omega^\alpha\Kcos(\omega)}{\alpha_k^2\Kcos^2(\omega)+\omega^2\left(\alpha_k\frac{\Ksin(\omega)}{\omega}-1\right)^2}\d\omega&\leq \int_{\omega_k+\omega_k^q}^\infty\frac{\omega^{\alpha}\Kcos(\omega)}{\omega^2\left(1-\alpha_k\frac{\Ksin(\omega_k+\omega_k^q)}{\omega_k+\omega_k^q}\right)^2} \d\omega\\
 &\leq c\int_{\omega_k+\omega_k^q}^\infty\frac{\omega^{\alpha}\Kcos(\omega)\omega_k^{2-2q}}{\omega^2} \d\omega \\
&=c \int_{\omega_k+\omega_k^q}^\infty\frac{\omega\Kcos(\omega)\omega_k^{2-2q}}{c\omega^{3-\alpha}}\d\omega\\
&\leq c\int_{\omega_k+\omega_k^q}^\infty\frac{\omega_k^{2-2q}}{\omega^{3-\alpha}}\d \omega,
\end{aligned}
\end{equation*}
where the last inequality follows from the fact that $\omega\Kcos(\omega)$ is bounded, by virtue of Lemma~\ref{lem:complete-monotone:fourier-analysis}~(a). We now integrate the above integral with respect to $\omega$ to find 
\begin{equation*}
 c\int_{\omega_k+\omega_k^q}^\infty\frac{\omega_k^{2-2q}}{\omega^{3-\alpha}}\\d\omega \leq c\frac{\omega_k^{2-2q}}{\omega_k^{2-\alpha}}\leq \frac{c}{\alpha_k^{q-\alpha/2}},
\end{equation*}
which implies that
\begin{align*}
I_5\leq \frac{c}{\alpha_k^{q-\alpha/2}}.
\end{align*}

Regarding $I_2$, we invoke Inequality~\eqref{ineq:ksin} again to find
\begin{equation*}
\begin{aligned}
I_2&=\int_{\omega_k-\omega_k^q}^{\omega_k-1}\frac{\omega^\alpha\Kcos(\omega)}{\alpha_k^2\Kcos^2(\omega)+\omega^2\left(\alpha_k\frac{\Ksin(\omega)}{\omega}-1\right)^2}\d\omega\\
&\leq \int_{\omega_k-\omega_k^q}^{\omega_k-1}\frac{\omega^{\alpha}\Kcos(\omega)}{\alpha_k^2\Kcos^2(\omega)+c\frac{\omega^2}{\omega_k^2}(\omega_k-\omega)^2}\d\omega.
\end{aligned}
\end{equation*}
Also, since $q\in(0,1)$ and $\omega_k\to\infty$ as $k\to\infty$ (Lemma~\ref{lem:complete-monotone:fourier-analysis}~(c)), for $k$ sufficiently large and any $\omega\in[\omega_k-\omega_k^q,\omega_k-1]$, the ratio $\omega/\omega_k$ is bounded from below uniformly in $k$. We then infer that
\begin{equation*}
\int_{\omega_k-\omega_k^q}^{\omega_k-1}\frac{\omega^{\alpha}\Kcos(\omega)}{\alpha_k^2\Kcos^2(\omega)+c\frac{\omega^2}{\omega_k^2}(\omega_k-\omega)^2}\d\omega 
\leq \int_{\omega_k-\omega_k^q}^{\omega_k-1}\frac{\omega_k^{\alpha}\Kcos(\omega)}{\alpha_k^2\Kcos^2(\omega)+c(\omega_k-\omega)^2}\d\omega.
\end{equation*}
Young's inequality now implies
\begin{equation*}
\begin{aligned}
\int_{\omega_k-\omega_k^q}^{\omega_k-1}\frac{\omega_k^{\alpha}\Kcos(\omega)}{\alpha_k^2\Kcos^2(\omega)+c(\omega_k-\omega)^2}\d\omega &\leq \frac{c\omega_k^\alpha}{\alpha_k} \int_{\omega_k-\omega_k^q}^{\omega_k-1}\frac{1}{\omega_k-\omega}\d\omega = c\frac{\omega_k^\alpha\log (\omega_k^q)}{\alpha_k}\leq c\frac{\log \alpha_k}{\alpha_k^{1-\alpha/2}},
\end{aligned}
\end{equation*}
since $\omega_k^2/\alpha_k=O(1)$ as $k\to\infty$, by Lemma~\ref{lem:complete-monotone:fourier-analysis}~(c). 

A similar argument (by writing $\omega-\omega_k$ instead of $\omega_k-\omega$ wherever is applicable) yields the estimate
\begin{align*}
I_4=\int_{\omega_k+1}^{\omega_k+\omega_k^q}\frac{\omega^\alpha\Kcos(\omega)}{\alpha_k^2\Kcos^2(\omega)+\omega^2\left(\alpha_k\frac{\Ksin(\omega)}{\omega}-1\right)^2}\d\omega\leq c\frac{\log \alpha_k}{\alpha_k^{1-\alpha/2}}.
\end{align*}

To estimate $I_3$, we recall from Lemma~\ref{lem:complete-monotone:fourier-analysis}~(b) that $\omega^2\Kcos(\omega)$ is increasing on $\omega\in(0,\infty)$. It follows that
\begin{align*}
I_3=\int_{\omega_k-1}^{\omega_k+1}\frac{\omega^\alpha\Kcos(\omega)}{\alpha_k^2\Kcos^2(\omega)+\omega^2\left(\alpha_k\frac{\Ksin(\omega)}{\omega}-1\right)^2}\d\omega& \leq\int_{\omega_k-1}^{\omega_k+1}\frac{\omega^\alpha}{\alpha_k^2\Kcos(\omega)}\d\omega\\
&\leq \int_{\omega_k-1}^{\omega_k+1}\frac{\omega^{\alpha+2}}{\alpha_k^2\Kcos(1)}\d\omega\\
&\leq \frac{2(\omega_k+1)^{\alpha+2}}{\alpha_k^2\Kcos(1)}\\
&\leq \frac{c}{\alpha_k^{1-\alpha/2}}.
\end{align*}

We finally collect everything to arrive at
\begin{equation}\label{ineq:spatial-gle:mode:Holder:1}
\begin{aligned}
\int_0^\infty \frac{\omega^\alpha\Kcos(\omega)}{\alpha_k^2\Kcos^2(\omega)+\left(\omega-\alpha_k\Ksin(\omega)\right)^2}\d\omega &\leq c\bigg(\frac{1}{\alpha_k^{1+q-\alpha/2}}+\frac{1}{\alpha_k^{q-\alpha/2}}+\frac{\log\alpha_k}{\alpha_k^{1-\alpha/2}}\bigg)\leq \frac{c}{\alpha_k^{q-\alpha/2}},
\end{aligned}
\end{equation}
which holds for sufficiently large $k$, since $\alpha_k\uparrow\infty$ as $k\to\infty$ by Assumption~\ref{cond:A}. We therefore conclude~\eqref{ineq:spatial-gle:mode:Holder}. The proof is thus complete.
\end{proof}

Finally, we give the proof of Corollary~\ref{cor:Holder_space}.
\begin{proof}[Proof of Corollary~\ref{cor:Holder_space}]
The argument for~\eqref{ineq:spatial-gle:mode:Holder:2:k<k^*} is omitted as it is similar to that for~\eqref{ineq:spatial-gle:mode:Holder:k<k^*} as in the proof of Proposition~\ref{prop:spatial-gle:mode:Holder}. 

With regard to~\eqref{ineq:spatial-gle:mode:Holder:2}, we have 
\begin{equation*}
\begin{aligned}
\Enone|u_k(t)|^2&=\frac{\lambda_k^2}{\pi}\int_0^\infty\close\frac{\Kcos(\omega)}{\alpha_k^2\Kcos^2(\omega)+\left(\omega-\alpha_k\Ksin(\omega)\right)^2}\d\omega\\
&=\frac{\lambda_k^2}{\pi}\Big\{\int_0^1+\int_1^\infty\Big\} \frac{\Kcos(\omega)}{\alpha_k^2\Kcos^2(\omega)+\left(\omega-\alpha_k\Ksin(\omega)\right)^2} \d\omega.
\end{aligned}
\end{equation*}
To bound the first integral on the RHS we observe:
\begin{align*}
\MoveEqLeft[5]\int_0^1\close\frac{\Kcos(\omega)}{\alpha_k^2\Kcos^2(\omega)+\left(\omega-\alpha_k\Ksin(\omega)\right)^2}\d\omega\\
& \leq \int_0^1\close\frac{1}{\alpha_k^2\Kcos(\omega)}\d\omega \leq \int_0^1\frac{1}{\alpha_k^2\Kcos(1)}\d\omega= \frac{1}{\alpha_k^2\Kcos(1)},
\end{align*}
since $\Kcos(\omega)$ is decreasing on $\omega\in(0,\infty)$, by Lemma~\ref{lem:complete-monotone:fourier-analysis}~(a). To estimate the second integral, we pick $r_1,\,r_2\in(0,1)$ such that $r_1-r_2/2=q$. We then invoke Inequality~\eqref{ineq:spatial-gle:mode:Holder:1} as follows:
\begin{align*}
\int_1^\infty\close\close\frac{\Kcos(\omega)}{\alpha_k^2\Kcos^2(\omega)+\left(\omega-\alpha_k\Ksin(\omega)\right)^2}\d\omega& \leq \int_1^\infty\close\close\frac{\omega^{r_2}\Kcos(\omega)}{\alpha_k^2\Kcos^2(\omega)+\left(\omega-\alpha_k\Ksin(\omega)\right)^2}\d\omega\\
& \leq \frac{c}{\alpha_k^{r_1-r/2}}=\frac{c}{\alpha_k^q}.
\end{align*}
We combine these two estimates to obtain~\eqref{ineq:spatial-gle:mode:Holder:2}, which concludes the proof.
\end{proof}

\section{Discussion} \label{sec:discuss}

We have rigorously analyzed a stochastic integro--partial--differential equation with memory~\eqref{eq:spatial-gle} satisfying the Fluctuation--Dissipation relationship that arises from statistical mechanical considerations in the study of thermally fluctuating viscoelastic media. Using the framework of generalized stationary processes from \cite{ito1954stationary,mckinley2018anomalous}, we obtain stationary solutions of ~\eqref{eq:spatial-gle} when the memory belongs to a large subclass $\CM_b$ of the completely monotone functions. Furthermore, we establish space--time H\"older regularity of the solutions. As we demonstrate below, when we compare the stochastic heat equation with memory to the classical formulation, the noise structure arising from the Fluctuation--Dissipation relationship yields greater regularity in time. 

The form of the equations studied here was directly motivated by the work of \cite{hohenegger2017fluid} on thermally fluctuating viscoelastic fluids. In that work, only a finite number of Fourier modes were used to define the space--time noise, and it is natural to ask when \eqref{eq:spatial-gle} is well--posed if infinitely many Fourier modes are used. The result we present is quite general and requires only Assumption \ref{cond:alpha-k:wellposed}, which is commonly seen in the linear SPDE literature~\cite{bonaccorsi2012asymptotic,
da1996ergodicity,da2014stochastic}. It is worth noting though, that in \cite{hohenegger2017fluid}, a particular form of the memory kernel was used, namely, a finite sum of exponentials. As we demonstrated, the well--posedness result that we obtain, Theorem~\ref{thm:spatial-gle:well-posed}, is applicable to a subclass of completely monotone functions of which sum-of-exponentials functions are members. We however remark that the sum--of--exponential form is not an artificial or highly restrictive one. Members of this family can approximate the class of completely monotone functions in such a way that the GLE has what is sometimes called \emph{transient anomalous diffusion}, which is to say that the associated processes are subdiffusive over arbitrarily large time intervals despite being diffusive in the large time limit.

Before moving on to the application to stochastic heat equations, we remark that our notion of solution, as well as the subsequent analysis, relies heavily on the linear structure of \eqref{eq:spatial-gle}. It also uses explicit calculations that exploit the Fluctuation--Dissipation form. It remains an open--ended question to explore well--posedness and (more interestingly) H\"older regularity of solutions when Fluctuation--Dissipation is not assumed.  Well--posedness becomes even more of a question when one considers non--linear terms to encode, for example, external forces acting on the fluid. We consider this to be an important open question.

We now discuss the regularity in the case that $A$ is the usual Laplacian operator in $\rbb^d$ with Dirichlet boundary condition on $\domain$. For the reader's convenience, we first recall the following stochastic heat equation for $u(t,\xbf):[0,\infty)\times\domain\to\rbb^d$
\begin{align}\label{eq:heat-equation:classic}
\dot{u}(t,\xbf)  = A\,u(t,\xbf)+\dot{W}(t,\xbf),\qquad (t,\xbf)\in\rbb\times\domain.
\end{align}
Here $W(t,\xbf)$ is a cylindrical Wiener process with the decomposition
\begin{align*}
W(t,\xbf)=\sum_{k\geq 1}\lambda_k W_k(t)e_k(\xbf),
\end{align*}
where $\{W_k\}_{k\geq 1}$ are i.i.d standard Brownian motions and $\{\lambda_k\}_{k\geq 1}$ are as in Assumption~\ref{cond:alpha-k:wellposed}. It is known that \cite[Example 5.24]{da2014stochastic} there exists a modification $U(t,\xbf)$ of $u(t,\xbf)$ the solution of~\eqref{eq:heat-equation:classic} such that $U(t,\xbf)$ is $\gamma-$H\"older continuous in time for $\gamma\in(0,(1-\eta)/2)$ and in space for $\gamma\in(0,1-\eta)$ where $\eta$ is as in Assumption~\ref{cond:alpha-k:wellposed}. Particularly, in the case of 1D heat equation with white noise ($\lambda_k=1$ for every $k$), the pair of H\"older constants is $(1/4,1/2)$ in $(t,\xbf)$ \cite[Page 6]{hairer2009introduction}.

Alternatively, we consider~\eqref{eq:heat-equation:classic} with memory as follows.
\begin{align}\label{eq:heat-equation:daprato}
\dot{u}(t,\xbf)  = k_0A\,u(t,\xbf)-\int_{-\infty}^t \close K(t-s)A\, u(s,\xbf)\d s+\dot{W}(t),\quad (t,\xbf)\in\rbb\times\domain,
\end{align}
where $k_0$ is a positive constant such that $k_0>\int_0^\infty K(t)\d t$ and $K$ is a completely monotone function. It was shown in~\cite[Lemma 3.7]{bonaccorsi2012asymptotic} that under Assumption~\ref{cond:alpha-k:wellposed}, the solution of~\eqref{eq:heat-equation:daprato} has the same H\"older regularity as the solution of~\eqref{eq:heat-equation:classic}. In other words, with the same noise but adding a small memory effect,~\eqref{eq:heat-equation:daprato} does not differ from~\eqref{eq:heat-equation:classic} in terms of regularity. Intuitively, this invariance can be explained as the memory effect being dominated by the dissipation because of the assumption $k_0>\int_0^\infty K(t)\d t$. We note that this condition also requires that $K$ be integrable. 

In contrast, recalling our system~\eqref{eq:spatial-gle}
\begin{align*} 
\dot{u}(t,\xbf)& =\int_{-\infty}^t \close K(t-s)A\, u(s,\xbf)\d s+ \Fbf(t,\xbf),\qquad (t,\xbf)\in\rbb\times\domain,\\
\Fbf(t,\xbf)&=\sum_{k\geq 1}\lambda_k F_k(t)e_k(\xbf),\quad\text{ and }\quad\E[F(t)F(s)]=K(|t-s|),
\end{align*}
where $K$ is not necessarily integrable, cf. Assumption~\ref{cond:K}. In view of Theorem~\ref{thm:spatial-gle:Holder}, while spatial regularity is the same as in the case of~\eqref{eq:heat-equation:classic} and~\eqref{eq:heat-equation:daprato},~\eqref{eq:spatial-gle} enjoys better regularity in time, namely $\gamma-$H\"older continuity holds in time for $\gamma\in (0,1-\eta)$. Particularly, in 1D, when $\lambda_k=1$, $k=1,2,\dots$, the pair of H\"older constants for~\eqref{eq:spatial-gle} is $(1/2-\ep,1/2-\ep)$ in $(t,\xbf)$ for every $\ep \in(0,1/2)$.

\section*{Acknowledgement}  The authors would like to thank Gustavo Didier for fruitful discussions on the topic of contour integrals. SM is grateful for support through grant NSF DMS--1644290.

\bibliographystyle{plain}
{\footnotesize\bibliography{spatial-gle-bib}}
\end{document}